\newtheorem{thm}{Theorem}[section]
\newtheorem{lem}[thm]{Lemma}
\newtheorem{prop}[thm]{Proposition}
\theoremstyle{definition}
\newtheorem{dfn}[thm]{Definition}
\newtheorem{exple}[thm]{Example}
\newtheorem{remark}[thm]{Remark}
\theoremstyle{plain}
\newtheorem{cor}[thm]{Corollary}
\numberwithin{equation}{section}
\numberwithin{equation}{section}
\newcommand{\C}{\mathbb{C}}
\newcommand{\N}{\mathbb{N}}
\newcommand{\Q}{\mathbb{Q}}
\newcommand{\Z}{\mathbb{Z}}
\newcommand{\F}{\mathbb{F}}
\newcommand{\m}{\mathfrak{m}}
\newcommand{\n}{\mathfrak{n}}
\newcommand{\p}{\mathfrak{p}}
\newcommand{\mfn}{\mathfrak{n}}
\newcommand{\mfp}{\mathfrak{p}}
\newcommand{\SL}{\mathrm{SL}}
\newcommand{\GL}{\mathrm{GL}}
\newcommand{\ra}{\rightarrow}
\newcommand{\ras}{\twoheadrightarrow}
\newcommand{\mrm}[1]{\mathrm{#1}}
\def\1{1\!\!1}
\newcommand{\psmat}[4]{\bigl( \begin{smallmatrix} #1 & #2 \\ #3 & #4 \end{smallmatrix} \bigr)}
\title[Drinfeld modular forms for $\Gamma_0(T)$]{The  structure of Drinfeld modular forms of level $\Gamma_0(T)$ and applications}
\author[T. Dalal]{Tarun Dalal}
\email{ma17resch11005@iith.ac.in}
\address{
Department of Mathematics \\
Indian Institute of Technology Hyderabad\\
Kandi, Sangareddy - 502285\\
INDIA. 
}
\author[N. Kumar]{Narasimha Kumar}
\email{narasimha@math.iith.ac.in}
\address{
Department of Mathematics \\
Indian Institute of Technology Hyderabad\\
Kandi, Sangareddy - 502285\\
INDIA. 
}
\keywords{Drinfeld modular forms, Graded algebra structure, Isobaric polynomials, Weight filtration, Mod-$\p$ congruences}
\subjclass[2010]{Primary 11G09, 11F52; Secondary 11F33, 13C05}
\date{\today}
\begin{document}
\begin{abstract}
In this article, we describe the structure of the $R$-algebra of Drinfeld modular forms $M(\Gamma_0(T))_R$
(resp., $M^0(\Gamma_0(T))_R$) of level $\Gamma_0(T)$ and  the structure of mod-$\p$ reduction of $M_{\mfp}^0(\Gamma_0(T))$ for $\p \neq (T)$. As a result, we are able to study the properties of the weight filtration 
for $M_{k,l}(\Gamma_0(T))$. Finally, we prove a result on mod-$\p$ congruences for Drinfeld modular forms of level $\Gamma_0(\p T)$ for $\p \neq (T)$.
\end{abstract}

\maketitle

\section{Introduction}
It is well-known that the $\C$-algebra of modular forms for $\SL_2(\Z)$ is generated by $E_4$ and $E_6$,
where $E_4,E_6$ denote the Eisenstein series of weight $4$, $6$, respectively. As a consequence, Serre, Swinnerton-Dyer studied the properties of the weight filtration for $\SL_2(\Z)$ (cf.~\cite{Ser71}, ~\cite{Swi73}). 
Similarly, the $\C$-algebra of modular forms for $\Gamma_0(2)$ is generated by $E_2^*(q):= E_2(q)- 2E_2(q^2)$, $E_4$. There are some important works, in the literature, about the structure of the $\C$-algebra of classical modular forms for higher levels (cf.~\cite{Rus14},~\cite{VZ} for more details). 

It is of natural interest to understand the structure of the $C$-algebra of Drinfeld modular forms of level $\Gamma_0(\n)$. Such a study has been initiated by Gekeler in~\cite{Gek88}, where he proved that 
the $C$-algebra $M(\GL_2(A))$, Drinfeld modular forms of level $\GL_2(A)$, is isomorphic to the polynomial ring in two variables $C[X,Y]$. Hence, 
to each $f \in M(\GL_2(A))$, one can  attach a unique polynomial in $C[X,Y]$.
Using these polynomials and the mod-$\p$ reduction of the ring $M_\p(\GL_2(A))$,  Drinfeld modular forms 
of level $\GL_2(A)$ with $\p$-integral $u$-expansion at $\infty$, Vincent (\cite{Vin10}) studied the properties of the weight filtration 
for $M_{k,l}(\GL_2(A))$. 
To the best of  authors knowledge, the analogues of these results for higher levels is unknown.

In this article, we  show that $R$-algebra $M(\Gamma_0(T))_R$,  Drinfeld modular forms
of level $\Gamma_0(T)$ with coefficients in $R$, is generated by $\Delta_W, \Delta_T, E_T$
and it is the quotient of a polynomial ring in $3$-variables $R[U,V,Z]$.  Hence, every $f \in M(\Gamma_0(T))_R$ can  be  expressed as a polynomial in $R[U,V,Z]$, but it is not unique. 

The novelty in our work is to show that the properties of the weight filtration for $M_{k,l}(\Gamma_0(T))$ depends only on the structure of $M^0(\Gamma_0(T))_R$,  Drinfeld modular forms of type $0$, level $\Gamma_0(T)$ with coefficients in $R$.
So, we describe the structure of $M^0(\Gamma_0(T))_R$ and show that it is a polynomial ring in two variables $R[U,V]$. Hence, for  $f \in M_{k,0}(\Gamma_0(T))_R$, one can attach a unique polynomial $\varphi_f(U,V) \in R[U,V]$ such that $f=\varphi_f(\Delta_W, \Delta_T)$.
Using these polynomials and the mod-$\p$ reduction of the ring $M_\p^0(\Gamma_0(T))$, Drinfeld modular forms of type $0$, level $\Gamma_0(T)$ with $\p$-integral $u$-expansion at $\infty$, we study the properties of the weight filtration for $M_{k,l}(\Gamma_0(T))$.

Finally, as an application, we shall prove a result on mod-$\p$ congruences for Drinfeld modular forms of level $\Gamma_0(\p T)$ for $\p \neq (T)$.




%

\subsection{Notations:}
\label{notations}
Throughout the article, we fix to use the following notations.

Let $p$ be an odd prime number and $q=p^r$ for some $r \in \N$.
Let $\F_q$ denote the finite field of order $q$. We set $A :=\F_q[T]$ 
and $K :=\F_q(T)$.
Let $K_\infty=\F_q((\frac{1}{T}))$ be the completion of $K$ 
with respect to the infinite place $\infty$ (corresponding to $\frac{1}{T}$-adic valuation) 
and the completion of an algebraic closure of $K_{\infty}$ is denoted by $C$. 

\begin{itemize}
\item Let $k\in \N\cup \{0\}$ and $l\in \Z/(q-1)\Z$ such that $k\equiv 2l \pmod {q-1}$.
      Let $0 \leq l \leq q-2$ be a lift of $l \in \Z/(q-1)\Z$. By abuse of notation,
      we continue to write $l$ for the integer as well as its class. Then, we define 
      $r_{k, l} := \frac{k-2l}{q-1}$.
\item Let $R$ be a ring such that $A \subseteq R \subseteq C$.
\item $\mfp$ denotes a prime ideal of $A$ generated by a monic irreducible polynomial $\pi$ of degree $d$  such that 
$\p \ne (T)$.

\end{itemize}


\section{Preliminaries}
\label{Basic theory of Drinfeld modular forms section}
In this section, we recall some basic theory of Drinfeld modular forms (cf. \cite{Gos80}, \cite{Gos80a}, \cite{Gek88}, \cite{GR96} for more details).

Let $L=\tilde{\pi}A \subseteq C$ be the $A$-lattice of rank $1$,
corresponding to the rank $1$ Drinfeld module given by $\rho_T=TX+X^q$, which is also known as the Carlitz module, where $\tilde{\pi}\in K_\infty(\sqrt[q-1]{-T})$ is defined up to a $(q-1)$-th root of unity.
  Any $x\in K_\infty^\times$ has the unique expression 
$x= \zeta_x\big(\frac{1}{T} \big)^{v_\infty(x)}u_x,$
where $\zeta_x\in \F_q^\times$, and $v_\infty(u_x-1)\geq 0$ ($v_\infty$ is the valuation at $\infty$).
The Drinfeld upper half-plane $\Omega= C-K_\infty$ has a rigid analytic structure.
The group $\GL_2(K_\infty)$ acts on $\Omega$ via fractional linear transformations.
For $\gamma = \psmat{a}{b}{c}{d}\in \GL_2(K_{\infty})$ and $f:\Omega \ra C$,  we define 
$f|_{k,l} \gamma := \zeta_{\det\gamma}^l\big(\frac{\det \gamma}{\zeta_{\det(\gamma)}} \big)^{k/2}(cz+d)^{-k}f(\gamma z).$ 
For an ideal $\mfn \subseteq A$, we define the congruence subgroup $\Gamma_0(\mfn)$ of $\GL_2(A)$ 
by $\{\psmat{a}{b}{c}{d}\in \mathrm{GL}_2(A) : c\in \mfn \}$.
\begin{dfn}
\label{Definition of DMF}
Let $k\in \N\cup \{0\},l\in \Z/(q-1)\Z$.
A rigid holomorphic function $f:\Omega \ra C$ is said to be a Drinfeld modular form of 
weight $k$, type $l$ for $\Gamma_0(\mfn)$ if 
\begin{enumerate}
\item $f|_{k,l}\gamma= f$ , $\forall \gamma\in \Gamma_0(\mfn)$, 

\item $f$ is holomorphic at the cusps of $\Gamma_0(\mfn)$.
\end{enumerate}
The space of Drinfeld modular forms of weight $k$, type $l$ for $\Gamma_0(\mfn)$ is denoted by $M_{k,l}(\Gamma_0(\mfn)).$
Furthermore, if $f$ vanishes (resp., vanishes at least twice) at the cusps of $\Gamma_0(\mfn)$, then we say $f$ is a Drinfeld cusp (resp., doubly cuspidal) form of weight $k$, type $l$ for $\Gamma_0(\mfn)$. These spaces are denoted by $M^1_{k,l}(\Gamma_0(\mfn))$ (resp., $M_{k,l}^2(\Gamma_0(\n))$).
\end{dfn}

Note that, if $k\not \equiv 2l \pmod {q-1}$ then $M_{k,l}(\Gamma_0(\mfn))=\{0\}$. So, without loss of generality, we   assume that $k\equiv 2l \pmod {q-1}$.

Let $u(z) :=  \frac{1}{e_L(\tilde{\pi}z)}$, where $e_L(z):= z{\prod_{\substack{0 \ne \lambda \in L }}}(1-\frac{z}{\lambda}) $
is the exponential function attached to the lattice $L$, which is associated with the Carlitz module. 
Then, each Drinfeld modular form  $f\in M_{k,l}(\Gamma_0(\mfn))$ has a unique $u$-expansion at $\infty$
given by $f=\sum_{i=0}^\infty a_f(i)u^i$.
Since $\psmat{\zeta}{0}{0}{1} \in \Gamma_0(\n)$ for $\zeta \in \F_q^\times$, condition $(1)$ of Definition~\ref{Definition of DMF} 
implies $a_f(i)=0$ if $i\not\equiv l \pmod {q-1}$.
Hence, the $u$-expansion of $f$ at $\infty$ can be written as
$\sum_{0 \leq \ i \equiv l \mod (q-1)} a_f(i)u^{i}.$ 
Note that any Drinfeld modular form of type $> 0$ is automatically a cusp form. 

We define $M_{k,l}(\Gamma_0(\n))_R$ to be the space of Drinfeld modular forms $f$ of weight $k$, type $l$ for $\Gamma_0(\n)$ such that the $u$-expansion of $f$ at $\infty$ belongs to $R[[u]]$ (or we simply say  ``with  coefficients in $R$'').
For any $0 \neq f\in M_{k,l}(\Gamma_0(\n))_R$, we define $f^0 :=1$. 

We end this discussion by introducing the notion of $\p$-adic valuation of $f$. This makes it easier to define a congruence between two Drinfeld modular forms.



\begin{dfn}
Let  $f=\sum_{n\geq 0} a_f(n)u^n$ be a formal $u$-series in $K[[u]]$. 
We define $v_\mfp(f) := \inf_n v_\mfp (a_f(n)),$ 
where $v_\mfp(a_f(n))$ is the $\mfp$-adic valuation of $a_f(n)$.  
We say $f$ has a $\p$-integral $u$-expansion if $v_\p(f)\geq 0$.
\end{dfn}

\begin{dfn}[Congruence]
\label{definition for congruence of modular forms}
Let $f= \sum_{n\geq 0}a_f(n)u^n$ and $g= \sum_{n \geq 0}a_g(n)u^n$ be two  $u$-formal  series in $K[[u]]$. 
We say that $f\equiv g \pmod \p$ if $v_\p(f-g) \geq 1$.
\end{dfn}

\subsection{Examples}
We now give some examples of Drinfeld modular forms. 
\begin{exple}[\cite{Gos80}, \cite{Gek88}]
\label{Eisenstein Series}
Let $d\in \N$. For $z\in \Omega$, the function
\begin{equation*}
g_d(z) := (-1)^{d+1}\tilde{\pi}^{1-q^d}L_d \sum_{\substack{a,b\in \F_q[T] \\ (a,b)\ne (0,0)}} \frac{1}{(az+b)^{q^d-1}}
\end{equation*}
is a Drinfeld modular form of weight $q^d-1$, type $0$ for $\mathrm{GL}_2(A)$,
where $\tilde{\pi}$ is the Carlitz period and $L_d:=(T^q-T)\cdots(T^{q^d}-T)$ is the least common multiple of all monic polynomials of degree $d$.
We refer to $g_d$ as an Eisenstein series.
\end{exple}

\begin{exple}[\cite{Gos80a}, \cite{Gek88}]
 \label{Delta-function}
For $z\in \Omega$, the function
\begin{equation*}
\Delta(z) := (T-T^{q^2})\tilde{\pi}^{1-q^2}E_{q^2-1} + (T^q-T)^q\tilde{\pi}^{1-q^2}(E_{q-1})^{q+1},
\end{equation*}
is a Drinfeld cusp form of weight $q^2-1$,  type $0$ for $\mathrm{GL}_2(A)$,
where $E_k(z)= \sum_{\substack{(0,0)\ne (a,b)\in A^2}}\frac{1}{(az+b)^k}.$
The $u$-expansion of $\Delta$ at $\infty$ is given by $-u^{q-1}+ \cdots$.  
\end{exple}
\begin{exple}[\cite{Gek88}]
\label{Poincare Series}
For $z \in \Omega$, the function
\begin{equation*}
 h(z) := \sum_{\gamma = \psmat{a}{b}{c}{d}\in H\char`\\ \GL_2(A)} \frac{\det \gamma . u(\gamma z)}{(cz+d)^{q+1}},
\end{equation*}
is a Drinfeld cusp form of weight $q+1$, type $1$ for $\mathrm{GL}_2(A)$,
where $H=\big\{\psmat{*}{*}{0}{1}\in \GL_2(A)\big\}$.
The $u$-expansion of $h$ at  $\infty$ is given by $ -u -u^{(q-1)^2+1}+\cdots$.

\end{exple}

We end this section by introducing an important function $E$, which is not modular.
 In~\cite{Gek88}, Gekeler defined the function $E(z):= \frac{1}{\tilde{\pi}} \sum_{\substack{a\in \F_q[T] \\ a \ \mathrm{monic}}} ( \sum_{b\in \F_q[T]} \frac{a}{az+b} )$
 which is analogous to the Eisenstein series of weight $2$ over $\Q$. Using $E$, one can construct the Drinfeld modular form
 \begin{equation}
 \label{E_T}
 E_T(z) := E(z)- TE(Tz) \in M_{2,1}(\Gamma_0(T)).
 \end{equation}
The $u$-expansion of $E_T$ at $\infty$ is given by $u-Tu^q+\cdots\in A[[u]]$
(cf.~\cite[Proposition 3.3]{DK}).
Since $\mrm{dim}_C M_{2,1}(\Gamma_0(T)) = 1$, we have $M_{2,1}(\Gamma_0(T)) = \langle E_T \rangle$.

\section{Statements of the main results}
In this section, we  state the main results of this article (cf. Theorem~\ref{MT_1}, Theorem~\ref{MT_2}, 
Theorem~\ref{MT_3}).

\subsection{On the structure of $M(\Gamma_0(T))_R$:}
Let $M(\Gamma_0(\n))_R := \oplus_{k,l }  M_{k,l}(\Gamma_0(\n))_R$ be the graded $R$-algebra of Drinfeld modular forms 
of level $\Gamma_0(\n)$ with coefficients in $R$. In~\cite[Theorem 5.13]{Gek88}, Gekeler described the $C$-algebra structure of $M(\GL_2(A))$. In fact, he proved 
$M(\GL_2(A)) = C[g_1,h],$ where $g_1\in M_{q-1,0}(\GL_2(A))$, $h\in M_{q+1,1}(\GL_2(A))$ are as in Example~\ref{Eisenstein Series}, Example~\ref{Poincare Series}, respectively. In particular,  any $f\in M_{k,l}(\GL_2(A))$ can be expressed as a unique isobaric polynomial in $g_1$ and $h$. To the best of authors knowledge, the  structure of the algebra of Drinfeld modular forms for general $\Gamma_0(\mfn)$ is unknown.
In \S~\ref{structure_general}, we describe the structure of  $M(\Gamma_0(T))_R$. 
More precisely, we prove
\begin{thm}[Theorem \ref{MT_1} in the text]
\label{MT_1_Intro}

The  $R$-algebra $M(\Gamma_0(T))_R$ is generated by $\Delta_W, \Delta_T$ and $E_T$,
where $\Delta_W, \Delta_T$ and $E_T$ are as in~\eqref{delta_T and delta_W} and~\eqref{E_T}, respectively.
Moreover,
the surjective map $\vartheta(R) : R[U,V,Z] \ras  M(\Gamma_0(T))_R$ defined by  $U\rightarrow \Delta_W, V\rightarrow \Delta_T, Z\rightarrow E_T$ induces an isomorphism 
\begin{equation*}
R[U,V,Z]/(UV-Z^{q-1}) \cong M(\Gamma_0(T))_R.
\end{equation*}
\end{thm}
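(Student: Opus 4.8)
The plan is to first establish the generation statement and then identify the kernel of the surjection $\vartheta(R)$. For generation, I would argue by induction on the weight $k$, treating the two types separately. For type $l=0$, forms live in $M_{k,0}(\Gamma_0(T))_R$; one knows $\dim_C M_{k,0}(\Gamma_0(T))$ from the dimension formula for $\Gamma_0(T)$, and the monomials $\Delta_W^a \Delta_T^b$ with $a,b \geq 0$ and appropriate weight constraint (each of $\Delta_W,\Delta_T$ having weight $q^2-1$, say, or whatever weights are fixed in~\eqref{delta_T and delta_W}) should be shown to be $C$-linearly independent and to span. Linear independence is the place where I would use the $u$-expansions at the two cusps $0$ and $\infty$ of $\Gamma_0(T)$: $\Delta_W$ and $\Delta_T$ have complementary vanishing behaviour at the two cusps (one is the "Atkin–Lehner twist" of the other), so a monomial $\Delta_W^a\Delta_T^b$ has order of vanishing governed by $(a,b)$ at the two cusps, and distinct pairs $(a,b)$ give linearly independent forms. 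A counting argument then shows the span has the right dimension, hence equals $M_{k,0}$. For general type $l$, given $f \in M_{k,l}(\Gamma_0(T))_R$ with $l \neq 0$, I would multiply by a suitable power of $E_T \in M_{2,1}(\Gamma_0(T))$ to reduce the type to $0$: since $E_T$ has $u$-expansion $u - Tu^q + \cdots$ with unit leading coefficient, it is invertible in $C((u))$, so $f/E_T^{\,l}$ (or $f \cdot E_T^{\,(q-1)-l}$, keeping things polynomial) lands in type $0$ and can be handled by the previous case; one must check holomorphy at the cusps is preserved, which follows since $E_T$ is a genuine modular form vanishing only at one cusp to order bounded appropriately. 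The $R$-integrality is then automatic because all the generators have $u$-expansions in $A[[u]]$ and the reduction steps only involve multiplication and division by $E_T$, whose $u$-expansion starts with a unit in $A$.

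Next I turn to the kernel. The relation $UV - Z^{q-1}$ lies in the kernel because $\Delta_W \Delta_T = E_T^{q-1}$ as an identity of modular forms — this should either already be known or be checked directly on $u$-expansions at $\infty$ (both sides are forms of the same weight $2(q-1)$ and type $0$, living in a space one can show is one-dimensional, and their leading $u$-coefficients agree), so $\vartheta(R)$ factors through $B := R[U,V,Z]/(UV-Z^{q-1})$. It remains to show the induced map $\bar\vartheta(R) : B \to M(\Gamma_0(T))_R$ is injective. I would do this by exhibiting an explicit $R$-basis of $B$ as a graded module and matching it against a spanning set for $M(\Gamma_0(T))_R$. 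Concretely, in $B$ every monomial $U^aV^bZ^c$ with $c \geq q-1$ can be rewritten using $Z^{q-1} = UV$, so $B$ is spanned over $R$ by the monomials $U^a V^b Z^c$ with $0 \leq c \leq q-2$, and moreover by the monomials with $\min(a,b) = 0$ once we also absorb $UV = Z^{q-1}$... more carefully: the monomials $\{U^a Z^c : a \geq 0,\ 0 \le c \le q-2\} \cup \{V^b Z^c : b \geq 1,\ 0 \le c \le q-2\}$ form an $R$-basis of $B$. I would then show their images are $C$-linearly independent in $M(\Gamma_0(T))$, again via the two-cusp $u$-expansion argument: the pair (order of vanishing at $\infty$, order of vanishing at $0$) of such a monomial determines $(a,b,c)$ uniquely, using that $E_T$ vanishes to order $1$ at $\infty$ and is nonvanishing at $0$ (or has controlled behaviour there), while $\Delta_W,\Delta_T$ swap the roles of the two cusps. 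Combined with the surjectivity already established, a graded-piece dimension count finishes injectivity.

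The main obstacle I anticipate is the bookkeeping in the dimension/linear-independence step: one needs the dimension formula for $M_{k,l}(\Gamma_0(T))$ for all $(k,l)$ (presumably available from the genus-zero structure of $X_0(T)$ and the two-cusp geometry), and one needs to verify that the proposed monomial basis of $B$ in each fixed bidegree $(k,l)$ has exactly that cardinality — this is a lattice-point count in the region cut out by $a,b,c \geq 0$, $a + b$ (weighted) $+ 2c = k$, $c \equiv l$, modulo the single relation, and it must come out equal to $\dim_C M_{k,l}$. A secondary technical point is making sure the relation $\Delta_W\Delta_T = E_T^{q-1}$ holds over $A$ (not just up to a scalar in $C^\times$), which pins down the normalizations in~\eqref{delta_T and delta_W} and~\eqref{E_T}; this is a finite check on leading $u$-coefficients but must be done consistently with the conventions fixed earlier. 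Everything else — surjectivity of $\vartheta(R)$, the fact that $UV - Z^{q-1}$ generates the \emph{whole} kernel rather than just lying in it — reduces to these two counts.
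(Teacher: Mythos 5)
Your strategy for generation and surjectivity is essentially the paper's: in each bidegree one exhibits the monomials $\Delta_W^{r_{k,l}-j}\Delta_T^{j}E_T^{l}$, proves linear independence from the complementary vanishing of $\Delta_W,\Delta_T$ at the two cusps, matches the dimension formula $\dim_C M_{k,l}(\Gamma_0(T))=1+r_{k,l}$, and gets $R$-integrality of the coefficients from the triangular $u$-expansions $u^{j(q-1)+l}+\cdots$. (Minor slip: $\Delta_W,\Delta_T$ have weight $q-1$, not $q^2-1$, but you hedged this and nothing depends on it.) For the kernel you take a genuinely different route from the paper: you propose to write down an explicit monomial basis of $B=R[U,V,Z]/(UV-Z^{q-1})$ and count dimensions, whereas the paper argues directly on a homogeneous element of the kernel, grouping its monomials by the exponent pair they acquire after substituting $Z^{q-1}\mapsto UV$, invoking algebraic independence of $\Delta_W,\Delta_T$, and then exhibiting each group explicitly as a multiple of $UV-Z^{q-1}$ via a telescoping factorization. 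Your route is legitimate and, once set up correctly, arguably cleaner.

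However, the basis of $B$ you wrote down is wrong, and this is the one step that would fail as stated. The set $\{U^aZ^c: a\ge 0,\ 0\le c\le q-2\}\cup\{V^bZ^c: b\ge 1,\ 0\le c\le q-2\}$ does not span $B$: you have imposed \emph{both} normal-form conditions ($Z$-degree at most $q-2$, coming from rewriting $Z^{q-1}\to UV$, and $\min(a,b)=0$, coming from rewriting $UV\to Z^{q-1}$) simultaneously, but these come from choosing the leading term of $UV-Z^{q-1}$ in two incompatible ways. Concretely, in weight $2(q-1)$, type $0$, the graded piece of $B$ has rank $3$ (spanned by the classes of $U^2$, $UV=Z^{q-1}$, $V^2$), matching $\dim_C M_{2(q-1),0}(\Gamma_0(T))=3$, but your list contains only $U^2$ and $V^2$; the class of $UV$ is simply absent, and your dimension count would come out short. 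The correct normal form is to impose only one condition, e.g.\ take all $U^aV^bZ^c$ with $a,b\ge 0$ arbitrary and $0\le c\le q-2$; in the bigraded piece of weight $k$, type $l$ this forces $c=l$ and $a+b=r_{k,l}$, so you recover exactly the set $\{U^{r_{k,l}-j}V^{j}Z^{l}\}$ of cardinality $1+r_{k,l}$, whose image is the basis $S$ already used for surjectivity. With that correction your dimension count closes and the argument is complete; your secondary worry about the normalization of $\Delta_W\Delta_T=E_T^{q-1}$ over $A$ is handled in the paper by a Sturm-type bound (comparing finitely many $u$-coefficients), exactly as you suggest.
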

An immediate consequence of Theorem~\ref{MT_1_Intro} and the well-known
results from commutative algebra (cf.~\cite{Mat80}) would imply the following
corollary.
 \begin{cor}
\label{MT_1_Cor}
For any ring $R$ with $A\subseteq R \subseteq C$ we have $M(\Gamma_0(T))_R \cong R \otimes_A M(\Gamma_0(T))_A$. 
In fact, the Krull dimension of $M(\Gamma_0(T))_C$ is $2$.
\end{cor}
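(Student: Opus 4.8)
The plan is to deduce both statements of Corollary~\ref{MT_1_Cor} directly from the presentation
$M(\Gamma_0(T))_R \cong R[U,V,Z]/(UV-Z^{q-1})$ furnished by Theorem~\ref{MT_1_Intro}, treating $R$ as an $A$-algebra. First I would observe that the polynomial ring satisfies the obvious base-change identity $R[U,V,Z] \cong R \otimes_A A[U,V,Z]$, and that this isomorphism is one of $A$-algebras. Since $-$ is right exact, tensoring the defining exact sequence
\begin{equation*}
0 \longrightarrow (UV-Z^{q-1}) \longrightarrow A[U,V,Z] \longrightarrow M(\Gamma_0(T))_A \longrightarrow 0
\end{equation*}
with $R$ over $A$ yields a surjection $R \otimes_A M(\Gamma_0(T))_A \twoheadrightarrow R[U,V,Z]/(UV-Z^{q-1}) \cong M(\Gamma_0(T))_R$, and one checks this agrees with the natural map sending $r \otimes f$ to $rf$. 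The content of the corollary is that this map is also injective, equivalently that the ideal $(UV-Z^{q-1})A[U,V,Z]$ remains "the same size" after base change, i.e. $R \otimes_A (UV-Z^{q-1}) \to (UV-Z^{q-1})R[U,V,Z]$ is an isomorphism.

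The key point is that $UV - Z^{q-1}$ is a nonzerodivisor in $A[U,V,Z]$ whose ideal is free of rank one as an $A[U,V,Z]$-module (a principal ideal generated by a nonzerodivisor is free of rank one). Flatness of the polynomial ring $A[U,V,Z]$ over $A$, combined with the fact that $A$ is a domain (so that $A \hookrightarrow R \subseteq C$ makes $R$ a torsion-free, hence flat, $A$-module — here $A=\F_q[T]$ is a PID, so torsion-free equals flat), gives that tensoring the short exact sequence $0 \to A[U,V,Z] \xrightarrow{\cdot(UV-Z^{q-1})} A[U,V,Z] \to M(\Gamma_0(T))_A \to 0$ with $R$ over $A$ preserves exactness. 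Hence $\mathrm{Tor}_1^A(R, M(\Gamma_0(T))_A) = 0$ and the base-change map is an isomorphism, proving $M(\Gamma_0(T))_R \cong R \otimes_A M(\Gamma_0(T))_A$.

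For the Krull dimension statement, apply the case $R = C$ and compute $\dim M(\Gamma_0(T))_C = \dim C[U,V,Z]/(UV-Z^{q-1})$. Since $C[U,V,Z]$ has Krull dimension $3$ and $UV - Z^{q-1}$ is a nonzero element of this domain, Krull's principal ideal theorem (cf.~\cite{Mat80}) gives that the quotient has dimension exactly $3 - 1 = 2$ (the element is not a unit and the ring is a finitely generated algebra over a field, so the dimension drops by precisely one). The main — really the only — obstacle is the flatness bookkeeping: one must make sure that $R$ is flat over $A$, which follows because $A$ is a PID and $R \subseteq C$ is torsion-free over $A$; once that is in hand, every step is the standard right-exactness-plus-$\mathrm{Tor}$-vanishing argument, and the dimension computation is immediate from the explicit presentation.
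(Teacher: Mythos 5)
Your argument is correct and is precisely the ``immediate consequence of Theorem~\ref{MT_1_Intro} and well-known results from commutative algebra'' that the paper asserts without spelling out: base-change the presentation $R[U,V,Z]/(UV-Z^{q-1})$ along $A\to R$ and apply Krull's principal ideal theorem to the hypersurface $UV-Z^{q-1}$ in $C[U,V,Z]$. One small remark: the flatness of $R$ over $A$ is not actually needed for the first assertion, since right-exactness of $R\otimes_A-$ already identifies $R\otimes_A\bigl(A[U,V,Z]/(UV-Z^{q-1})\bigr)$ with $R[U,V,Z]$ modulo the extended ideal, which is again $(UV-Z^{q-1})$; your torsion-free-over-a-PID argument is nonetheless valid.
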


\subsection{On the structure of $M^0(\Gamma_0(T))_R$:}
Let $M^0(\Gamma_0(T))_R := \oplus_{k=0}^\infty M_{k,0}(\Gamma_0(T))_R $ denote the graded $R$-algebra of Drinfeld modular forms $f$ of type $0$, level $\Gamma_0(T)$ with coefficients in $R$.
In~\cite[Proposition 4.6.2]{Cor97}, Cornelissen described the graded $C$-algebra structure of $M^0(\Gamma_0(T))$
and showed that $M^0(\Gamma_0(T))= C[g_1(z), g_1(Tz)].$
However, these  generators do not describe the $A$-structure, i.e., $M^0(\Gamma_0(T))_A \supsetneq A[g_1(z), g_1(Tz)]$. In~\S \ref{Structure of M0(Gamma0(T)) subsection}, we specify a set of generators for the graded $A$-algebra  $M^0(\Gamma_0(T))_A$. 
More precisely, we prove:

\begin{thm}[Theorem \ref{MT_2} in the text]
\label{MT_2_Intro}
The map 
$$\vartheta_0 (R): R[U,V]\longrightarrow M^0(\Gamma_0(T))_R$$ 
defined by $(U,V) \rightarrow (\Delta_W, \Delta_T)$
is an isomorphism. 
In particular, the $R$-algebra $M^0(\Gamma_0(T))_R$ is generated by $\Delta_W, \Delta_T$,
i. e., $M^0(\Gamma_0(T))_R = R[\Delta_W, \Delta_T]$.
\end{thm}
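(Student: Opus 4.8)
The plan is to derive Theorem~\ref{MT_2} from Theorem~\ref{MT_1} by extracting the type-$0$ part of the graded algebra $M(\Gamma_0(T))_R$. By Theorem~\ref{MT_1} the map $\vartheta(R)\colon R[U,V,Z]\to M(\Gamma_0(T))_R$ ($U\mapsto\Delta_W$, $V\mapsto\Delta_T$, $Z\mapsto E_T$) is surjective with kernel $(UV-Z^{q-1})$, hence induces an isomorphism of graded $R$-algebras
\begin{equation*}
\Theta\colon B:=R[U,V,Z]/(UV-Z^{q-1})\;\xrightarrow{\ \sim\ }\;M(\Gamma_0(T))_R.
\end{equation*}
Because $\Delta_W$ and $\Delta_T$ have type $0$ while $E_T$ has type $1$, we give $B$ the $\Z/(q-1)\Z$-grading with $\deg U=\deg V=0$ and $\deg Z=1$; the relation $UV-Z^{q-1}$ is homogeneous of type $0$ for this grading, so it is well defined, and $\Theta$ is then an isomorphism of type-graded algebras. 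Since $\vartheta_0(R)$ is exactly the composite $R[U,V]\hookrightarrow R[U,V,Z]\twoheadrightarrow B\xrightarrow{\ \Theta\ }M(\Gamma_0(T))_R$, whose image lies in the type-$0$ component $M^0(\Gamma_0(T))_R$, it suffices to prove that $R[U,V]\to B$ is injective with image equal to the type-$0$ component $B_0$ of $B$.

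The observation that makes this work is that $UV-Z^{q-1}$, read as a polynomial in $Z$ over $S:=R[U,V]$, is monic of degree $q-1$. Therefore $B=S[Z]/(Z^{q-1}-UV)$ is a free $S$-module with basis $1,Z,\dots,Z^{q-2}$; in particular the structure map $S=R[U,V]\to B$ is injective, so $\vartheta_0(R)$ is injective. Moreover $Z^i$ is homogeneous of type $i$, and $0,1,\dots,q-2$ are pairwise incongruent modulo $q-1$, so the decomposition $B=\bigoplus_{i=0}^{q-2}S\,Z^i$ refines the type grading and the type-$j$ component of $B$ equals $S\,Z^j$ for each $0\le j\le q-2$. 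Taking $j=0$ gives $B_0=S\cdot1=R[U,V]$. Transporting through $\Theta$ yields $M^0(\Gamma_0(T))_R=\Theta(B_0)=R[\Delta_W,\Delta_T]$ and shows that $\vartheta_0(R)\colon R[U,V]\to M^0(\Gamma_0(T))_R$ is an isomorphism.

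Given Theorem~\ref{MT_1}, there is no serious obstacle; the only points needing care are verifying that $\Theta$ respects the type grading and that the free $R[U,V]$-basis $\{1,Z,\dots,Z^{q-2}\}$ consists of type-homogeneous elements of pairwise distinct types, so that the type-$0$ part of $B$ is precisely the rank-one summand $R[U,V]\cdot1$ and nothing larger. If one preferred not to invoke Theorem~\ref{MT_1}, one would instead combine Cornelissen's identification $M^0(\Gamma_0(T))=C[g_1(z),g_1(Tz)]$ with the explicit $u$-expansions of $\Delta_W$ and $\Delta_T$: algebraic independence over $C$ gives injectivity at once, whereas upgrading surjectivity from $C$ to an arbitrary $R$ is the delicate step, and would be handled by a leading-coefficient induction on the $u$-expansion using that $\Delta_W$ and $\Delta_T$ lie in $A[[u]]$ with controlled lowest-order terms.
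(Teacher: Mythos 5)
Your proof is correct, but it takes a different route from the paper's. The paper proves Theorem~\ref{MT_2} directly: surjectivity comes from the unitriangular $u$-expansions of the basis $\{\Delta_W^{r_{k,0}-j}\Delta_T^{j}\}$ (the coefficient-comparison step displayed as \eqref{expression of modular forms of any type} with $l=0$, which forces the $C$-coefficients of any $f$ with $u$-expansion in $R[[u]]$ to lie in $R$), and injectivity is quoted from the algebraic independence of $\Delta_W,\Delta_T$ in Proposition~\ref{IMP_Prop}(\ref{P2}). You instead deduce the statement formally from Theorem~\ref{MT_1}: since $UV-Z^{q-1}$ is monic in $Z$, the quotient $B=R[U,V,Z]/(UV-Z^{q-1})$ is free over $R[U,V]$ with basis $1,Z,\dots,Z^{q-2}$, these basis elements have pairwise distinct types, and the relation is type-homogeneous, so the type-$0$ component of $B$ is exactly $R[U,V]$ and transports under the (bi)graded isomorphism of Theorem~\ref{MT_1} to $M^0(\Gamma_0(T))_R$. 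This is a valid and clean argument, with no circularity since the paper's proof of Theorem~\ref{MT_1} does not use Theorem~\ref{MT_2}; it also makes visible the finer decomposition $M(\Gamma_0(T))_R=\bigoplus_{l=0}^{q-2}M^0(\Gamma_0(T))_R\cdot E_T^{l}$, which the paper obtains separately via the map $c_T$. The trade-off is that your route leans on the full strength of the kernel computation in Theorem~\ref{MT_1} (itself resting on the same two ingredients the paper cites directly), whereas the paper's two-line proof isolates exactly what is needed. Your closing alternative via Cornelissen correctly identifies the $C$-to-$R$ descent of surjectivity as the delicate point; the triangular $u$-expansion argument you sketch there is precisely what the paper does.
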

An immediate consequence of Theorem~\ref{MT_2_Intro} and the well-known
results from commutative algebra (cf.~\cite{Mat80}) would imply that
 \begin{cor}
\label{MT_2_Cor}
For any ring $R$ with $A\subseteq R \subseteq C$, we have $M^0(\Gamma_0(T))_R \cong R \otimes_A M^0(\Gamma_0(T))_A$. Moreover, if $R$ is a Noetherian local ring, then the Krull dimension of $M^0(\Gamma_0(T))_R$ is $\dim(R)+2$.
\end{cor}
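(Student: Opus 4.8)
The plan is to deduce Corollary~\ref{MT_2_Cor} purely formally from Theorem~\ref{MT_2_Intro}, which identifies $M^0(\Gamma_0(T))_A$ with the polynomial ring $A[U,V]$ via $\vartheta_0(A)$. For the base-change statement, I would first observe that the isomorphism $\vartheta_0(R)\colon R[U,V]\xrightarrow{\sim} M^0(\Gamma_0(T))_R$ holds for every $R$ with $A\subseteq R\subseteq C$ by Theorem~\ref{MT_2_Intro}. In particular, $R\otimes_A M^0(\Gamma_0(T))_A \cong R\otimes_A A[U,V] \cong R[U,V] \cong M^0(\Gamma_0(T))_R$, where the middle isomorphism is the standard fact that polynomial rings commute with base change. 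One should check that this composite is induced by the natural map sending $r\otimes f$ to $rf$ (i.e.\ that it is the "natural" base-change map, so the statement has content), but this is immediate from tracking where the generators $\Delta_W,\Delta_T$ go under each arrow.

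For the Krull dimension assertion, assume $R$ is a Noetherian local ring. By the base-change isomorphism just established, $M^0(\Gamma_0(T))_R \cong R[U,V]$, a polynomial ring in two variables over $R$. The dimension formula $\dim R[U,V] = \dim R + 2$ is a classical result of commutative algebra: for a Noetherian ring $R$ one has $\dim R[X_1,\dots,X_n] = \dim R + n$ (see~\cite{Mat80}). Local Noetherianness guarantees $\dim R$ is finite, so the formula applies cleanly and gives $\dim M^0(\Gamma_0(T))_R = \dim(R)+2$.

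I do not anticipate a genuine obstacle here: the corollary is a formal consequence of Theorem~\ref{MT_2_Intro} together with two off-the-shelf facts (polynomial rings commute with base change; the dimension of a polynomial ring over a Noetherian ring adds the number of variables). The only point requiring a word of care is that Theorem~\ref{MT_2_Intro} is already stated for \emph{all} intermediate rings $R$, so one does not even need to propagate the isomorphism from $R=A$ by tensoring; the base-change statement is then really an equality of two presentations of the same object, both polynomial rings, and the content is just that the natural comparison map realizes this. If instead one wanted to derive the general $R$ case from the single case $R=A$, one would note that $\vartheta_0(R) = \mathrm{id}_R \otimes_A \vartheta_0(A)$ after identifying $M^0(\Gamma_0(T))_R$ with $R\otimes_A M^0(\Gamma_0(T))_A$ — but since the $u$-expansion principle already gives $M^0(\Gamma_0(T))_R = \{f \in M^0(\Gamma_0(T))_C : a_f(i) \in R\ \forall i\}$ and $A[U,V]\otimes_A R = R[U,V]$ is torsion-free over $R$, flatness is not even needed.
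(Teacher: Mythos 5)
Your proposal is correct and follows exactly the route the paper intends: the corollary is stated there as an immediate consequence of Theorem~\ref{MT_2_Intro} together with standard commutative algebra (base change for polynomial rings and $\dim R[X_1,\dots,X_n]=\dim R+n$ for Noetherian $R$), with no further proof given. Your elaboration, including the remark that the comparison map is the natural one on the generators $\Delta_W,\Delta_T$ and that local Noetherianness ensures finiteness of $\dim R$, fills in precisely the details the paper leaves implicit.
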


%

%

\subsection{On the mod-$\p$ reduction:}
Let $M_\p(\GL_2(A))$ denote the space of Drinfeld modular forms for $\mathrm{GL}_2(A)$ 
with $\p$-integral $u$-expansion at $\infty$. Define
$$\overline{M_{\p}}(\GL_2(A)) := \{\overline{f} \in \F_\p[[u]] \mid \exists f \in M_\p(\GL_2(A)) \ \mathrm{such\ that} \ f\equiv \overline{f} \pmod\p \},$$
where $\F_\p:= A_\p/\p A_\p$ and $A_\p$ is the localization of $A$ at $\p$. The map  
$\epsilon: \F_\p[X,Y] \ra \overline{M_\p}(\GL_2(A))$
defined by  $X \ra \overline{g}_1$, $Y \ra \overline{h}$
is surjective and 
$\mathrm{ker}\ \epsilon= (\overline{A}_d(X,Y)-1)$, where $A_d(X,Y)$ is the unique 
isobaric polynomial attached to $g_d$ and $``-"$ denotes the reduction mod $\p$
(cf.~\cite[Corollary 12.4]{Gek88}). In \S\ref{Graded structure section}, we extend this  result to Drinfeld modular forms of type $0$ for $\Gamma_0(T)$.

Let $M_\p^0(\Gamma_0(T))$ denote the space of Drinfeld modular forms of type $0$ for $\Gamma_0(T)$
 with $\p$-integral $u$-expansion at $\infty$. We define 
$$\overline{M_{\p}^0}(\Gamma_0(T)) := \{\overline{f} \in \F_\p[[u]] \mid \exists f \in M_\p^0(\Gamma_0(T)) \ 
\mathrm{such\ that} \ f\equiv \overline{f} \pmod\p \}.$$
  
\begin{thm}[Theorem~\ref{MT_3} in text]
\label{MT_3_Intro}
The surjective map $\iota : \F_\p[U,V]  \ras \overline{M_{\p}^0}(\Gamma_0(T))$
defined by $U \ra \overline{\Delta}_W$ and $V \ra  \overline{\Delta}_T$ induces an isomorphism 
\begin{equation*}
\F_\p[U,V]/(\overline{\phi}_d(U,V)-1) \cong \overline{M_{\p}^0}(\Gamma_0(T)),
\end{equation*}
where $\phi_d(U,V)$ is the unique isobaric polynomial attached to $g_d$, i.e., $g_d= \phi_d(\Delta_W, \Delta_T)$.
\end{thm}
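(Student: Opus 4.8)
The plan is to deduce Theorem~\ref{MT_3_Intro} from the structural results already in hand, namely Theorem~\ref{MT_2_Intro} (the isomorphism $R[U,V] \cong M^0(\Gamma_0(T))_R$ with $U \mapsto \Delta_W$, $V \mapsto \Delta_T$) together with the analogous mod-$\p$ statement of Gekeler for $\GL_2(A)$ quoted above. First I would observe that, since $\Delta_W$ and $\Delta_T$ have $\p$-integral $u$-expansions, reduction mod $\p$ of their monomials makes sense; applying Theorem~\ref{MT_2_Intro} with $R = A_\p$ (or directly with $R = \F_\p$, after checking that $M^0(\Gamma_0(T))_{A_\p}$ surjects onto $\overline{M^0_\p}(\Gamma_0(T))$ by definition of the latter) gives a surjective ring homomorphism $\iota: \F_\p[U,V] \twoheadrightarrow \overline{M^0_\p}(\Gamma_0(T))$, $U \mapsto \overline{\Delta}_W$, $V \mapsto \overline{\Delta}_T$. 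So the content of the theorem is the computation of $\ker\iota$.

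The key mechanism for producing a relation is that the Eisenstein series $g_d$ for $\GL_2(A)$ has $u$-expansion congruent to $1$ modulo $\p$ (this is exactly what drives Gekeler's $\ker\epsilon = (\overline{A}_d(X,Y)-1)$; it follows from $g_d \equiv 1 \pmod \p$, which in turn comes from the leading-coefficient normalization $(-1)^{d+1}\tilde\pi^{1-q^d}L_d$ and the fact that $L_d$ generates $\p$-adically trivial data when $\p \nmid L_d$, i.e.\ when $d$ is the degree of $\pi$ and $\p \neq (T)$). Now $g_d \in M_{q^d-1,0}(\GL_2(A)) \subseteq M_{q^d-1,0}(\Gamma_0(T))$, so by Theorem~\ref{MT_2_Intro} there is a unique isobaric $\phi_d(U,V) \in A[U,V]$ with $g_d = \phi_d(\Delta_W,\Delta_T)$; reducing mod $\p$ gives $\overline{\phi}_d(\overline{\Delta}_W,\overline{\Delta}_T) = \overline{g}_d = 1$ in $\overline{M^0_\p}(\Gamma_0(T))$, so $\overline{\phi}_d(U,V) - 1 \in \ker\iota$. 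Thus I get a well-defined surjection $\F_\p[U,V]/(\overline{\phi}_d(U,V)-1) \twoheadrightarrow \overline{M^0_\p}(\Gamma_0(T))$, and it remains to prove injectivity, equivalently $\ker\iota = (\overline{\phi}_d(U,V)-1)$.

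For the reverse inclusion I would argue by a weight/degree filtration comparison, mimicking Gekeler's argument. Suppose $P(U,V) \in \ker\iota$; using the relation $\overline{\phi}_d(U,V) = 1$ I can reduce $P$ modulo the ideal $(\overline{\phi}_d - 1)$ to a normal form — e.g.\ since $\phi_d$ is isobaric of weight $q^d-1$ in the grading $\deg U = \deg V = q^2-1$ (as $\Delta_W, \Delta_T$ both have weight $q^2-1$), I can use $\overline{\phi}_d = 1$ to lower the degree and assume $P$ has bounded degree in a chosen variable, or lies in a fixed transversal. Then if such a reduced $P \neq 0$ mapped to $0$, lifting to characteristic zero via Theorem~\ref{MT_2_Intro} and comparing $u$-expansion valuations (the $u$-orders of $\Delta_W$, $\Delta_T$ are distinct, $q-1$ and $q^2-1$ respectively, so distinct monomials have a "leading behaviour" one can separate) would force $P \equiv 0 \pmod\p$, contradicting that $P$ was already reduced to have $\p$-integral coefficients not all divisible by $\p$. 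I expect the main obstacle to be this last injectivity step: making precise the normal-form reduction modulo $(\overline{\phi}_d-1)$ and the valuation bookkeeping that shows a nonzero reduced polynomial cannot vanish — this requires knowing enough about the $u$-expansions of $\Delta_W$ and $\Delta_T$ (their leading terms and that they are algebraically independent mod $\p$ up to precisely the single relation coming from $g_d$), and handling the subtlety that $\overline{\phi}_d - 1$ is not homogeneous in the weight grading, so the argument cannot be done degree-by-degree naively but must track the "top weight piece" of $\overline{\phi}_d$ separately.
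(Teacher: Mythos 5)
Your first two steps (surjectivity of $\iota$ via Theorem~\ref{MT_2_Intro}, and the containment $(\overline{\phi}_d(U,V)-1)\subseteq \ker\iota$ coming from $g_d\equiv 1 \pmod\p$) agree with the paper and are fine. The gap is in the reverse inclusion $\ker\iota\subseteq(\overline{\phi}_d(U,V)-1)$, which is the entire content of the theorem and which you leave as a sketch that does not go through as stated. Your proposed ``normal form plus $u$-valuation'' argument founders on exactly the point you flag: since $\overline{\phi}_d-1$ is not isobaric, an element of $\ker\iota$ mixes weights, and comparing leading $u$-orders of monomials $\overline{\Delta}_W^a\overline{\Delta}_T^b$ (which only see the exponent of $\overline{\Delta}_T$) cannot by itself rule out a hypothetical extra relation; the assertion that $\overline{\Delta}_W,\overline{\Delta}_T$ satisfy ``precisely the single relation coming from $g_d$'' is what has to be proved, not an input.

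The paper closes this gap with two ingredients you are missing. First, a commutative-algebra reduction: $\ker\iota$ is a nonzero, non-maximal prime of the two-dimensional ring $\F_\p[U,V]$ (prime because $\overline{M_\p^0}(\Gamma_0(T))\subseteq\F_\p[[u]]$ is a domain, non-maximal because e.g.\ $\overline{\Delta}_T$ has positive $u$-order and so is not invertible), hence a height-one, principal prime; therefore it suffices to show $\overline{\phi}_d(U,V)-1$ is irreducible, for then $(\overline{\phi}_d-1)$ is itself a height-one prime contained in $\ker\iota$ and the two must coincide. Second, the arithmetic input that makes irreducibility provable: $\overline{\phi}_d(U,V)$ is square-free (Proposition~\ref{square free}, which rests on Gekeler's Tate--Drinfeld module computation and on the hypothesis $\p\neq(T)$, so that neither $U$ nor $V$ divides $\overline{\phi}_d$). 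Given square-freeness, one writes a putative factorization $\overline{\phi}_d-1=RS$, decomposes $R,S$ into isobaric components, and compares weights: the top components multiply to $\overline{\phi}_d$ and hence share no common factor, which forces all intermediate components to vanish and contradicts $R_0S_0=-1$. Without the square-freeness of $\overline{\phi}_d$ (or some equivalent control on the simplicity of the zeros of $\overline{g}_d$), no amount of normal-form bookkeeping will pin down $\ker\iota$.
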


\subsection{Weight filtration and its properties:}
By Theorem~\ref{MT_1_Intro}, for $f \in M_{k,l}(\Gamma_0(T))_R$, 
there exists a polynomial $\eta(U,V,Z)\in R[U,V,Z]$ such that $f=\eta(\Delta_W,\Delta_T,E_T)$.
Unfortunately, this polynomial is not unique, since $\mrm{ker}(\vartheta(R)) \neq 0$. 
By Theorem~\ref{MT_2_Intro}, 
for $f\in M_{k,0}(\Gamma_0(T))_R$, there exists a unique polynomial $\varphi_f(U,V) \in R[U,V]$ such that $f = \varphi_f(\Delta_W, \Delta_T)$. 
Using $\varphi_f(U,V)$  and Theorem~\ref{MT_3_Intro},
we study the properties of the weight filtration for $M_{k,l}(\Gamma_0(T))_{A_\p}$ (cf. Theorem~\ref{Lower_Filtration}). This can be thought of as a generalization of the work in~\cite{Vin10} 
from $\GL_2(A)$ to $\Gamma_0(T)$.

\subsection{Application:}
In the last section, as an application of the results of this article, 
we shall prove a result on mod-$\p$ congruences for Drinfeld modular forms of level $\Gamma_0(\p T)$
for $\p \neq (T)$, which is a variant of~\cite[Theorem 1.5]{DK}. 

Since the methods of this article work for any degree one prime,
it is of interest to study similar results for the remaining genus zero case i.e., for $\deg(\n)=2$, and also for  higher genus.

\section{Structure of Drinfeld modular forms for $\Gamma_0(T)$}
\label{Graded structure section}
In this section, first
we introduce two important Drinfeld modular forms and study their properties.
Recall that $0$ and $\infty$ are the only cusps of the Drinfeld modular curve $X_0(T):= \overline{\Gamma_0(T)\char`\\ \Omega}$ and the operator $W_T:=\psmat{0}{-1}{T}{0}$ permutes the cusps.
Consider the functions
\begin{equation}
\label{delta_T and delta_W}
 \Delta_T(z) := \frac{g_1(Tz)-g_1(z)}{T^q-T}\ \mrm{and} \ 
\Delta_W(z) := \frac{T^qg_1(Tz)-Tg_1(z)}{T^q-T}.
 \end{equation}
An easy check implies that $\Delta_T$, $\Delta_W\in M_{q-1,0}(\Gamma_0(T))$.
By~\cite[Page 3]{Cho09}, the $u$-expansion of $\Delta_T$ at $\infty$ is given by
$$\Delta_T=u^{q-1}-u^{q(q-1)}+\cdots\in A[[u]].$$
Since 
$g_1(z)=1-(T^q-T)u^{q-1}-(T^q-T)u^{(q^2-q+1)(q-1)}+ \cdots$ and 
$u(Tz)=u^q+\cdots$,
the $u$-expansion of $\Delta_W$ at $\infty$ is given by
\begin{equation*}
\Delta_W = 1+ Tu^{q-1} - T^qu^{q(q-1)} + \cdots \in A[[u]].
\end{equation*}
Next, we study  some important properties of $\Delta_T, \Delta_W$, and $E_T$.
Before, we state them as a proposition, first let us recall the dimension formula for $\Gamma_0(T)$.
\begin{prop}
\label{Dimension Gamma0(T)}
The $C$-dimension of $M_{k,l}(\Gamma_0(T))$ is $1+r_{k,l}$, where $r_{k,l} := \frac{k-2l}{q-1}$.
\end{prop}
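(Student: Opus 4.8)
The plan is to compute the dimension by the standard technique of relating Drinfeld modular forms for $\Gamma_0(T)$ to those for $\GL_2(A)$, using the known dimension formula for the full modular group and the index of $\Gamma_0(T)$ in $\GL_2(A)$. Recall that $\dim_C M_{k,l}(\GL_2(A))$ equals $1 + \lfloor k/(q+1)\rfloor$ or a similar floor expression depending on the congruence class of $k$; the corresponding formula for $\Gamma_0(T)$ should come from a Riemann--Roch computation on the Drinfeld modular curve $X_0(T)$, which has genus $0$ with two cusps $0$ and $\infty$. First I would set up the line bundle whose global sections are $M_{k,l}(\Gamma_0(T))$: a modular form of weight $k$ corresponds to a section of $\omega^{\otimes k}$ (suitably interpreted, with the type $l$ fixing the action of the diagonal torus), and holomorphy at the two cusps imposes the relevant pole conditions.

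The key steps, in order, would be: (i) identify $X_0(T)$ as $\mathbb{P}^1$ over $C$ with exactly the two cusps, and with no elliptic points (there are no elliptic points for $\Gamma_0(T)$ in the Drinfeld setting when $\deg \mathfrak n = 1$, which is what makes the genus zero case clean); (ii) express the degree of the relevant automorphic line bundle in weight $k$ in terms of $k$ and the index $[\GL_2(A):\Gamma_0(T)] = q+1$, tracking the contribution at $\infty$ coming from the $u$-uniformizer; (iii) apply Riemann--Roch (or directly count monomials, since everything lives on $\mathbb{P}^1$) to get $\dim = 1 + r_{k,l}$ where $r_{k,l} = (k-2l)/(q-1)$ records the "excess" weight after subtracting the contribution of type $l$. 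An alternative and perhaps cleaner route, given that the paper will go on to build explicit generators, is to argue by exhibiting an explicit basis: multiply $E_T^{2l/\,\cdot}$-type forms (more precisely the appropriate power of $E_T$ realizing type $l$ and weight $2l$) by the $r_{k,l}+1$ monomials $\Delta_W^i \Delta_T^{r_{k,l}-i}$ for $0 \le i \le r_{k,l}$, check these are linearly independent via their $u$-expansions (the leading terms $u^{(q-1)(r_{k,l}-i)}$ are distinct), and then invoke the upper bound $\dim_C M_{k,l}(\Gamma_0(T)) \le 1 + r_{k,l}$ from Riemann--Roch to conclude equality. Since $\Delta_W$ has $u$-expansion starting with $1$ and $\Delta_T$ starts with $u^{q-1}$, the products $\Delta_W^i\Delta_T^{r_{k,l}-i}$ have pairwise distinct orders of vanishing at $\infty$, giving linear independence immediately.

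The main obstacle I anticipate is getting the cusp contributions and the type-$l$ bookkeeping exactly right: one must be careful that the local uniformizer at $\infty$ is $u$ (not $u^{q-1}$) while the $q-1$ divisibility of exponents forced by the torus action is what produces the $q-1$ in the denominator of $r_{k,l}$, and one must correctly account for the cusp at $0$ via the Atkin--Lehner involution $W_T$ which interchanges the two cusps. A secondary subtlety is confirming that $\Gamma_0(T)$ genuinely has no elliptic points contributing to the divisor, so that the Riemann--Roch count is as simple as stated. I would likely just cite the relevant dimension formula from the literature (e.g.\ Gekeler, or Cornelissen~\cite{Cor97}) rather than re-derive it, since it is standard, and then note that the exhibited forms $\Delta_W^i\Delta_T^{r_{k,l}-i}$ (times an appropriate power of $E_T$ in the type-$l$ case) realize the full dimension — this dual approach both proves the formula and sets up the structure theorems to follow.
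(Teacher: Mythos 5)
Your plan is sound but takes a genuinely different route from the paper. The paper disposes of the proposition almost entirely by citation: for $l \equiv 0 \pmod{q-1}$ it invokes Gekeler's dimension formula \cite[Proposition 6.4]{Gek86}, and for $l \not\equiv 0$ it uses the fact that forms of non-trivial type are automatically cuspidal together with the explicit basis $\{c_j : 0 \le j \le k-2\}$ of $M_k^1(\Gamma_1(T))$ from \cite[\S 4.2--4.3]{BV19}, simply counting the $c_i$ whose index lies in the residue class $l-1 \pmod{q-1}$; Riemann--Roch is mentioned only as an aside (``one can also use Riemann--Roch to give an alternate proof''), which is exactly the route you make primary. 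Your two-sided strategy --- lower bound from the $r_{k,l}+1$ monomials $\Delta_W^i\Delta_T^{r_{k,l}-i}E_T^l$ with distinct vanishing orders $i(q-1)+l$ at $\infty$, upper bound from Riemann--Roch on $X_0(T)\cong \mathbb{P}^1$ --- is logically clean and not circular, and it is worth noting that the paper runs the same explicit forms in the \emph{opposite} logical direction: in Proposition 4.3(3) it uses the already-established dimension count to upgrade their linear independence to a basis. What your route buys is self-containedness and a proof that doubles as the construction of the integral basis; what it costs is that the entire burden of the upper bound falls on the Drinfeld-setting Riemann--Roch bookkeeping (the correct automorphic line bundle for weight $k$ and type $l$, the cusp contributions at $0$ and $\infty$, and the elliptic-point question for $\Gamma_0(T)$, which you rightly flag but do not resolve), which is precisely the part the paper sidesteps by leaning on \cite{Gek86} and \cite{BV19}. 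Since you say you would ultimately cite the literature for that upper bound anyway, the two proofs converge in practice; just be aware that the type-$l$ decomposition of the weight-$k$ sections (the source of the $q-1$ in the denominator of $r_{k,l}$) is the one step that genuinely needs an argument and is what \cite{BV19} supplies in the paper's version.
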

\begin{proof}
If $l \equiv 0 \pmod {q-1}$, then the proposition follows from~\cite[Proposition 6.4]{Gek86}.
If $l \not \equiv 0 \pmod {q-1}$, then the proposition can be deduced from the discussion in~\cite[\S 4.3]{BV19},
since any Drinfeld modular form of non-trivial type is cuspidal. 
Now, we follow the notations as in~\cite{BV19}.

 Let $\mathcal{B}_k^1(\Gamma_1(T)):=\{c_j:0\leq j \leq k-2\}$ be a basis of $M_k^1(\Gamma_1(T))$ as in~\cite[\S 4.2]{BV19} and $C_j:=\{c_i: i\equiv j \pmod {q-1}\}.$ From the discussion in~\cite[\S 4.3]{BV19}, we get $M_{k,l}(\Gamma_0(T))=M_{k,l}^1(\Gamma_0(T))=\mathrm{span}\{c_i: c_i\in C_{l-1}\}.$ Hence 
$\dim M_{k,l}(\Gamma_0(T))=|C_{l-1}|=n=1+r_{k,l}.$
One can also use Riemann-Roch Theorem to give an alternate proof.
\end{proof}
We now define a  Victor Miller basis for $M_{k,l}(\Gamma_0(\mfn))$.
For any $f\in M_{k,l}(\Gamma_0(\n))$, suppose the $u$-expansion of $f$ at $\infty$ is given by $\sum_{i\geq 0} a_f({i(q-1)+l})u^{i(q-1)+l}$. Then, we define $b_f(i) := a_f({i(q-1)+l})$.
\begin{dfn}[Victor Miller basis]
We say that a basis ${f_1, f_2, \ldots, f_r}$
of $M_{k,l}(\Gamma_0(\n))$, where $r = \mrm{dim}_C M_{k,l}(\Gamma_0(\n))$, is a Victor Miller basis
for $M_{k,l}(\Gamma_0(\mfn))$ if $b_{f_j}(i) = \delta_{ij}$ for all $1 \leq i, j \leq r$.
\end{dfn}
      
We now state  some important properties of $\Delta_T, \Delta_W$, and $E_T$.
\begin{prop}
\label{IMP_Prop}
Let $\Delta_W, \Delta_T,  E_T$ be as in~\eqref{delta_T and delta_W} and~\eqref{E_T}. Then, we have  
\begin{enumerate}
\item \label{P1} $g_1=\Delta_W-T^q\Delta_T$.
\item \label{P2} 
       The modular form $\Delta_T$(resp., $\Delta_W$) vanishes $q-1$ times at  $\infty$ (resp., at  $0$) and non-zero on $\Omega \cup \{0\}$ (resp.,  on $\Omega \cup \{\infty\}$). Hence,  the modular forms $\Delta_T, \Delta_W$ are algebraically independent.
 \item \label{P3}
        The set $S:= \{\Delta_W^{r_{k,l}}E_T^l, \Delta_W^{r_{k,l}-1}\Delta_TE_T^l, \ldots, \Delta_W\Delta_T^{r_{k,l}-1} E_T^l, \Delta_T^{r_{k,l}}E_T^l \}$ is a basis for  $M_{k,l}(\Gamma_0(T))$. In particular, the space  $M_{k,l}(\Gamma_0(T))$ has an integral basis, i.e., a basis consisting of modular forms with coefficient in $A$.
\item \label{P8} There exists a Victor Miller basis for the $C$-vector space $M_{k,l}(\Gamma_0(T))$.
 
 \item  \label{P4}
        For  $k \in \N \cup \{0\}$, the map $\eta: M_{k-2l,0}(\Gamma_0(T)) \ra M_{k,l}(\Gamma_0(T))$ 
         defined by $f \ra fE_T^l $
         is an isomorphism. 
 \item \label{P5}
        We have $E_T^{q-1}= \Delta_W\Delta_T$.  In particular, the function $E_T\in M_{2,1}(\Gamma_0(T))$ vanishes exactly once at the cusps $0,\infty$ and non-vanishing  
        elsewhere.
 \item \label{P6}
        $\partial\Delta_W= -\Delta_WE_T$, $\partial \Delta_T=0$, $\partial(E_T)= E_T^2$,  
      where  $\partial f$ as in Definition~\ref{Definition_2nd Derivative}.
\item \label{P7} $h(z)= \partial(g_1(z))= \partial(g_1(Tz))= -\Delta_W(z)E_T(z)$, $\Delta= -\Delta_W^q\Delta_T$.
\end{enumerate}
\end{prop}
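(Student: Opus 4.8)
The plan is to establish the eight assertions roughly in the order given, since most of the later ones lean on the earlier ones, and to derive the analytic vanishing statements in tandem with the algebraic identities that force them.

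\textbf{Step 1 (items \ref{P1}, \ref{P2}).} Item \ref{P1} is a direct linear-algebra check: from the definitions in~\eqref{delta_T and delta_W}, $\Delta_W - T^q\Delta_T = \frac{(T^qg_1(Tz)-Tg_1(z)) - (T^qg_1(Tz)-T^qg_1(z))}{T^q-T} = \frac{(T^q-T)g_1(z)}{T^q-T}=g_1(z)$. For item \ref{P2} I would combine the $u$-expansion information already recorded — $\Delta_T = u^{q-1} - u^{q(q-1)}+\cdots$ and $\Delta_W = 1 + Tu^{q-1}+\cdots$ — which shows $\Delta_T$ vanishes to order exactly $q-1$ at $\infty$ while $\Delta_W$ is a unit at $\infty$. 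Applying the Atkin--Lehner operator $W_T$ (which swaps the cusps $0$ and $\infty$) interchanges, up to the automorphy factor, the roles of $g_1(z)$ and $g_1(Tz)$, and hence (up to scalars depending on $T$) interchanges $\Delta_T$ and $\Delta_W$; so the statement at $0$ for $\Delta_W$ follows from the statement at $\infty$ for $\Delta_T$. Non-vanishing on $\Omega$ I would get from the fact that $g_1$ is non-vanishing on $\Omega$ (a known property of the Eisenstein series for $\GL_2(A)$; equivalently $\Delta = -g_1^q\Delta_W$ reasoning, or directly from Gekeler's product formula / the fact that $\Delta$ has all its zeros at the cusps), together with a valence-count on $X_0(T)$: a form of weight $q-1$ can have total zero-divisor degree only $(q-1)/(q-1)\cdot(\text{a normalizing constant})$, and the $q-1$ zeros at one cusp already exhaust it, leaving none on $\Omega$ or at the other cusp. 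Algebraic independence of $\Delta_T,\Delta_W$ is then immediate: a nontrivial homogeneous polynomial relation would, after dehomogenizing, express $\Delta_W/\Delta_T$ as algebraic over $C$, hence constant, contradicting that one vanishes at $\infty$ and the other does not.

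\textbf{Step 2 (items \ref{P3}, \ref{P8}, \ref{P4}).} For item \ref{P3}, note each listed element $\Delta_W^{r_{k,l}-j}\Delta_T^{j}E_T^{\,l}$ has weight $(q-1)(r_{k,l}) + 2l = k$ and type $l$ (using $E_T\in M_{2,1}$), so all $r_{k,l}+1$ of them lie in $M_{k,l}(\Gamma_0(T))$; by Proposition~\ref{Dimension Gamma0(T)} this is exactly $\dim_C M_{k,l}(\Gamma_0(T))$, so it suffices to prove linear independence. Reading $u$-expansions: $\Delta_W^{r_{k,l}-j}\Delta_T^{j}E_T^{\,l}$ has leading term a unit times $u^{\,j(q-1)+l}$ (since $\Delta_T$ contributes $u^{q-1}$, $\Delta_W$ a unit, $E_T$ contributes $u$ to the leading term), and these leading orders $l, (q-1)+l, 2(q-1)+l,\dots$ are distinct, forcing independence; this simultaneously proves $S$ is a basis and exhibits an $A$-integral basis. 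Item \ref{P8} follows formally: row-reduce the matrix of $b$-coefficients of the basis $S$ over $C$ — it is already in triangular form by the order computation above, so a unipotent change of basis produces the Victor Miller normalization $b_{f_j}(i)=\delta_{ij}$. Item \ref{P4}: multiplication by $E_T^{\,l}$ sends $M_{k-2l,0}\to M_{k,l}$ and is injective because $E_T$ is not a zero divisor in the integral domain of modular functions (it is a nonzero holomorphic form); surjectivity follows by comparing dimensions via Proposition~\ref{Dimension Gamma0(T)}, since $r_{k-2l,0}=\frac{(k-2l)-0}{q-1}=r_{k,l}$.

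\textbf{Step 3 (items \ref{P5}, \ref{P6}, \ref{P7}).} For \ref{P5}, apply item \ref{P3} with $k=2(q-1)$, $l=q-1$ (so $r_{k,l}=0$): then $M_{2(q-1),q-1}(\Gamma_0(T))$ is one-dimensional, spanned by $\Delta_W^{0}\Delta_T^{0}E_T^{\,q-1}=E_T^{\,q-1}$, while $\Delta_W\Delta_T$ also lies in this space; comparing leading $u$-coefficients (both are $u^{q-1}+\cdots$, since $E_T=u+\cdots$ and $\Delta_W\Delta_T = (1+\cdots)(u^{q-1}+\cdots)$) pins down the constant as $1$, giving $E_T^{\,q-1}=\Delta_W\Delta_T$. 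The vanishing claim for $E_T$ then follows from \ref{P2}: $E_T^{\,q-1}$ vanishes to order exactly $q-1$ at each of $0,\infty$ and is nonvanishing on $\Omega$, hence $E_T$ vanishes to order exactly $1$ at each cusp and nowhere on $\Omega$. For \ref{P6} and \ref{P7} I would use the standard fact (Gekeler) that $h = \partial g_1$ where $\partial$ is the normalized derivation of Definition~\ref{Definition_2nd Derivative}, together with $\partial$ being $C$-linear and satisfying the relevant Leibniz/quasi-modularity rule; since $\Delta_T$ is a fixed $C$-combination of $g_1(z)$ and $g_1(Tz)$ both of whose images under $\partial$ equal $h$ up to the same normalization, one gets $\partial\Delta_T = 0$; $\partial\Delta_W$ then follows from $g_1 = \Delta_W - T^q\Delta_T$ and $\partial g_1 = h$, giving $\partial\Delta_W = h$, which one rewrites as $-\Delta_W E_T$ by identifying the weight-$(q+1)$, type-$1$ form $h$ inside the one-dimensional space $M_{q+1,1}(\Gamma_0(T))= M_{(q+1)-2,0}\cdot E_T = \langle \Delta_W E_T\rangle$ (item \ref{P4} with $k=q+1$, $l=1$) via leading coefficients ($h=-u+\cdots$, $\Delta_W E_T = u+\cdots$). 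Then $\partial E_T$: differentiate $E_T^{\,q-1}=\Delta_W\Delta_T$; since the derivation on the function field satisfies $\partial(x^{q-1}) = -x^{q-1}\cdot(\text{something})$ in characteristic $p$... actually more simply, in characteristic $p$ with $q=p^r$ one has $\partial(E_T^{q-1}) = (q-1)E_T^{q-2}\partial E_T \cdot(\text{twist})$; I would instead invoke the product rule $\partial(E_T^{\,q-1}) = \partial(\Delta_W)\Delta_T + \Delta_W\partial(\Delta_T) = -\Delta_W E_T\Delta_T = -E_T\cdot E_T^{\,q-1} = -E_T^{\,q}$, and match this against the formula for $\partial$ of a $(q-1)$st power to extract $\partial E_T = E_T^2$. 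Finally $\Delta = -\Delta_W^q\Delta_T$: both sides lie in $M_{q^2-1,0}(\Gamma_0(T))$ (weight $q(q-1)+(q-1)=q^2-1$), and one checks agreement of leading $u$-coefficients using $\Delta = -u^{q-1}+\cdots$ and $\Delta_W^q\Delta_T = (1+\cdots)^q(u^{q-1}+\cdots) = u^{q-1}+\cdots$; since $\dim M_{q^2-1,0}(\Gamma_0(T)) = 1+ q$ is not one-dimensional, a little more care is needed — I would instead pull back the known identity $\Delta = -g_1^q\Delta_W$ is not quite it either; the clean route is to use $\Delta$ for $\GL_2(A)$ viewed inside level $\Gamma_0(T)$ and the product formula, or to compute enough $u$-coefficients. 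This last identity is where I expect the only genuine friction.

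\textbf{Main obstacle.} The parts that are more than bookkeeping are: (i) the non-vanishing of $\Delta_T,\Delta_W$ on $\Omega$, which requires either a valence formula on $X_0(T)$ or a product-expansion input and must be set up carefully so the cusp orders add up correctly; and (ii) the derivation identities in item \ref{P6}, specifically getting $\partial E_T = E_T^2$ honestly — the cleanest path is to differentiate the relation $E_T^{\,q-1} = \Delta_W\Delta_T$ from \ref{P5} and use $\partial\Delta_W = -\Delta_W E_T$, $\partial\Delta_T = 0$, rather than manipulating $E$ directly, but one must be attentive to the characteristic-$p$ behavior of $\partial$ on $(q-1)$st powers. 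Everything else reduces to comparing finitely many $u$-coefficients against the dimension count from Proposition~\ref{Dimension Gamma0(T)}.
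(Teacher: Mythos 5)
Your overall strategy coincides with the paper's: use the $u$-expansions and the dimension formula to get the basis statements, transfer information between the cusps via $W_T$, and prove the algebraic identities by locating both sides in a common finite-dimensional space and comparing coefficients. However, there is a recurring concrete error in the coefficient-comparison steps: you misidentify the dimensions of the relevant spaces. For the identity $E_T^{q-1}=\Delta_W\Delta_T$ you take $k=2(q-1)$, $l=q-1$ and claim $r_{k,l}=0$; but the paper's convention lifts the type to $0\le l\le q-2$, so $E_T^{q-1}$ has type $0$ and lies in $M_{2(q-1),0}(\Gamma_0(T))$, which is \emph{three}-dimensional (spanned by $\Delta_W^2,\Delta_W\Delta_T,\Delta_T^2$). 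Matching only the leading $u$-coefficient therefore does not pin down the identity; one must match the coefficients of $u^0,u^{q-1},u^{2(q-1)}$, which is exactly what the paper does via the Sturm-type bound of Armana--Wei. The same slip occurs when you identify $h$ inside $M_{q+1,1}(\Gamma_0(T))$: that space has $r_{q+1,1}=1$, hence dimension $2$ with basis $\{\Delta_W E_T,\Delta_T E_T\}$, so again a single leading coefficient is not enough (the paper checks the coefficients of $u$ and $u^q$). These steps, as written, fail; the fix is routine but necessary.

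Two further gaps. First, you deduce $\partial\Delta_T=0$ from ``$\partial(g_1(Tz))=h(z)$,'' citing only the level-one fact $\partial g_1=h$; but $\partial_k=\Theta+kE$ does not commute with $z\mapsto Tz$ (the term $kE(z)f(Tz)$ is not $kE(Tz)f(Tz)$), so $\partial(g_1(Tz))=h(z)$ is a genuinely new identity at level $\Gamma_0(T)$ that must be proved --- the paper again does this by a finite coefficient check. (Your idea of obtaining $\partial E_T=E_T^2$ by applying the Leibniz rule to $E_T^{q-1}=\Delta_W\Delta_T$, using $q-1\equiv-1$, is correct and arguably cleaner than the paper's direct coefficient check, once the earlier identities are secured.) Second, you leave $\Delta=-\Delta_W^q\Delta_T$ essentially unproved, flagging it as ``friction''; the paper disposes of it in one line from $h^{q-1}=-\Delta$ together with $h=-\Delta_W E_T$ and $E_T^{q-1}=\Delta_W\Delta_T$, namely $-\Delta=h^{q-1}=\Delta_W^{q-1}E_T^{q-1}=\Delta_W^{q}\Delta_T$. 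Finally, for the non-vanishing of $\Delta_T$ on $\Omega\cup\{0\}$ the paper simply cites Choi's result rather than running a valence count; your sketch there is plausible but would need the cusp widths on $X_0(T)$ made precise to be a proof.
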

%
In literature, there is a folklore conjecture that there exists an integral  basis for $M_{k,l}(\Gamma_0(\n))$.  
The known results for $M_{k,l}(\GL_2(A))$, $M_{k,0}(\Gamma_0(T))$ are due to Gekeler (\cite{Gek88}), Cornelissen (\cite[Proposition 4.6.2]{Cor97}) respectively. For general level $\n$, the best known result is  
for $M_{2,1}^2(\Gamma_0(\n))$ due to Armana (\cite[Section 4.2]{Arm08}). Note that, Proposition~\ref{IMP_Prop}(\ref{P3})
implies  that the space  $M_{k,l}(\Gamma_0(T))$  has an integral basis. 

\begin{proof}[Proof of Proposition~\ref{IMP_Prop}]
\begin{enumerate}
\item This follows from the definitions of $\Delta_W$ and $\Delta_T$.
\item By $u$-expansion of $\Delta_T$ at $\infty$, we see that $\Delta_T$ vanishes exactly $q-1$ times at  $\infty$.
      By~\cite[Proposition 3.3]{Cho09}, the function $\Delta_T$ is non-zero on $\Omega \cup \{0\}$. 
      Since $\Delta_W= -T^\frac{q+1}{2}\Delta_T\mid_{q-1,0} W_T$ and $W_T$ permutes the cusps, 
      we see that $\Delta_W$ vanishes $q-1$ times at $0$ and non-zero on $\Omega \cup \{\infty\}$. 
      
      We now show that the functions   $\Delta_T, \Delta_W$ are algebraically independent.
      Suppose there is an algebraic relation between $\Delta_T$ and $\Delta_W$. The graded structure of $M(\Gamma_0(T))$ 
      implies that there exists an algebraic relation of minimal weight $k$ and of  the form
      \begin{equation}
      \label{minimal weight k}
      c_0\Delta_W^{\frac{k}{q-1}}+\sum_{i=1}^{n-1}c_i \Delta_W^{x_i}\Delta_T^{y_i} 
           + c_n\Delta_T^\frac{k}{q-1}=0,        
      \end{equation}
      with $(x_i+y_i)(q-1)=k$, $x_iy_i \neq  0$  for all $1 \leq i \leq n-1$, and $c_0c_n \neq 0$.
      Evaluating both sides of~\eqref{minimal weight k} at $\infty$, we get $c_0=0$. This contradicts the minimality of $k$.

\item By Part (\ref{P2}), the set $S$ is linearly independent. Since $\# S = \mrm{dim}\ M_{k,l}(\Gamma_0(T))$,
      $S$ is a basis for the $C$-vector space $M_{k,l}(\Gamma_0(T))$. 
      Since the elements of $S$ have coefficients in $A$, $S$ is an integral basis for $M_{k,l}(\Gamma_0(T))$.

\item Let $S$ be as in Part (\ref{P3}). For $0 \leq j \leq r_{k,l}$, the $u$-expansion of elements of $S$ 
      is given by
      $$ f_j:= \Delta_W^{r_{k,l}-j}\Delta_T^j E_T^l = u^{j(q-1)+l}+\cdots \in A[[u]].$$
      Then, $B:= [b_{f_j}(i)]_{0 \leq j,i \leq r_{k,l}}$ is an upper triangular matrix  with diagonal entries $1$. Since the row reduced Echelon form of $B$ is an identity matrix, the existence of a Victor Miller basis for $M_{k,l}(\Gamma_0(T))$ follows.


\item  By Part (\ref{P3}),  the map  $\eta$ is surjective. By Proposition~\ref{Dimension Gamma0(T)},
        the domain, co-domain have same dimension, hence $\eta$ is an isomorphism.  
         

 \item  By~\cite[Theorem 6.3]{AW20}, it is enough to show that the coefficients of $u^0,u^{q-1}$ and $u^{2(q-1)}$ are equal in the $u$-expansions of $E_T^{q-1}$ and $\Delta_W\Delta_T$ at $\infty$. This is an easy check.
  The latter part follows from the vanishing (resp., non-vanishing) of $\Delta_T$, $\Delta_W$ on $\Omega\cup\{0,\infty\}$.
 
 
\item By~\cite[Theorem 6.3]{AW20}, it is enough to show that the coefficients of $u$ and $u^q$ are equal in the $u$-expansions of $\partial \Delta_T$ and $0$ (resp., $\partial\Delta_W$ and $\Delta_WE_T$ ) at $\infty$. The third equality can be verified by showing that the coefficients of $u^2$ are equal in the $u$-expansions of $\partial(E_T)$ and $E_T^2$. This is an easy check.


\item By~\cite[Theorem 6.3]{AW20}, it is enough to show that the coefficients of $u$ and $u^q$ are equal in the $u$-expansions of $h(z),\partial(g_1(z)), \partial(g_1(Tz))$ and $\Delta_W(z)E_T(z)$ at $\infty$. This is an easy check. The last relation follows from Part(\ref{P5}) and $h^{q-1}=-\Delta$.

   \end{enumerate}
\end{proof}

\subsection{On the structure of $M(\Gamma_0(T))_R$:} 
\label{structure_general}
We are now ready to prove one of the main results of this article, which is a generalization of~\cite[Theorem 5.13]{Gek88} from $\GL_2(A)$ to $\Gamma_0(T)$. 

%

\begin{thm}
\label{MT_1}
The  $R$-algebra $M(\Gamma_0(T))_R$ is generated by $\Delta_W, \Delta_T$ and $E_T$. Moreover,
the surjective map $\vartheta(R) : R[U,V,Z] \ras  M(\Gamma_0(T))_R$ defined by  $U\rightarrow \Delta_W, V\rightarrow \Delta_T, Z\rightarrow E_T$ induces an isomorphism 
\begin{equation}
R[U,V,Z]/(UV-Z^{q-1}) \cong M(\Gamma_0(T))_R.
\end{equation}
\end{thm}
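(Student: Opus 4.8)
The plan is to prove the two assertions — generation and the explicit presentation — in a way that bootstraps from the type-$0$ subalgebra. First I would observe that, by Proposition~\ref{IMP_Prop}(\ref{P3}), the set $S = \{\Delta_W^{r_{k,l}-j}\Delta_T^j E_T^l : 0 \le j \le r_{k,l}\}$ is an $A$-basis (hence, after extension of scalars, an $R$-basis once one knows the coefficients lie in $A$) of $M_{k,l}(\Gamma_0(T))$. Summing over all admissible $(k,l)$ immediately shows every $f \in M(\Gamma_0(T))_R$ is an $R$-linear combination of monomials in $\Delta_W, \Delta_T, E_T$, which proves generation and the surjectivity of $\vartheta(R)$. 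The content of the theorem is therefore the identification of $\ker \vartheta(R)$ with the ideal $(UV - Z^{q-1})$.

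For the kernel, I would proceed as follows. By Proposition~\ref{IMP_Prop}(\ref{P5}) we have $E_T^{q-1} = \Delta_W \Delta_T$, so $UV - Z^{q-1} \in \ker\vartheta(R)$ and there is an induced surjection $\bar\vartheta : R[U,V,Z]/(UV - Z^{q-1}) \twoheadrightarrow M(\Gamma_0(T))_R$. To see it is injective, I would exploit the grading: $R[U,V,Z]$ is graded by assigning $\deg U = \deg V = q-1$, $\deg Z = 2$ (matching the weights), and $UV - Z^{q-1}$ is homogeneous, so the quotient is graded and $\bar\vartheta$ is a morphism of graded $R$-modules. It then suffices to check injectivity in each weight $k$ (and type $l$, recorded by the exponent of $Z$ mod $q-1$). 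A convenient set of representatives for $B := R[U,V,Z]/(UV - Z^{q-1})$ is obtained by noting that modulo the relation every monomial can be rewritten so that $U$ and $V$ do not both appear and the exponent of $Z$ is at most $q-2$: concretely, $B$ is a free $R$-module with basis $\{U^a Z^c, V^b Z^c : a,b \ge 0,\ 0 \le c \le q-2\}$ (with $U^0 Z^c = V^0 Z^c$ identified). Counting, in a fixed weight $k \equiv 2l \pmod{q-1}$, the number of such basis monomials of that weight with $Z$-exponent $\equiv l$ is exactly $1 + r_{k,l}$ — the same monomials as in $S$ after substituting $U\mapsto \Delta_W$, etc. Since $\bar\vartheta$ sends this basis precisely to the basis $S$ of $M_{k,l}(\Gamma_0(T))_R$, it is a bijection in each graded piece, hence an isomorphism.

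The one point requiring care — and the main obstacle — is establishing that $\{U^a Z^c, V^b Z^c : 0 \le c \le q-2\}$ really is an $R$-\emph{basis} of $B$, i.e. that there are no further relations beyond $UV = Z^{q-1}$. One clean way is to go the other direction: the monomials in $S$ are $R$-linearly independent in $M(\Gamma_0(T))_R$ (Proposition~\ref{IMP_Prop}(\ref{P2}), (\ref{P3})), and they are the images of the claimed spanning set of $B$; since $\bar\vartheta$ is surjective and the target has, in each weight, $R$-rank at least the number of these monomials, while the source is spanned by exactly these monomials, a rank count forces the monomials to be a basis of $B$ and $\bar\vartheta$ to be injective. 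This sidesteps any direct commutative-algebra argument about the hypersurface ring. Alternatively one can invoke that $R[U,V,Z]/(UV - Z^{q-1})$ is (the base change to $R$ of) a normal domain, the affine cone over a rational curve, but the rank-counting argument is self-contained and only uses facts already proved in the excerpt. Finally, Corollary~\ref{MT_1_Cor} follows since the presentation is defined over $A$ and the relation's coefficients are in $A$, so $M(\Gamma_0(T))_R \cong R \otimes_A M(\Gamma_0(T))_A$, and the Krull dimension of the hypersurface $R[U,V,Z]/(UV-Z^{q-1})$ over $C$ is $3 - 1 = 2$.
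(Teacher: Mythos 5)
Your route to the kernel is genuinely different from the paper's, and the underlying strategy (a graded rank count against a monomial basis of the hypersurface ring) is viable --- but the normal form you write down for $B=R[U,V,Z]/(UV-Z^{q-1})$ is wrong, and the error breaks your count as stated. You claim every monomial reduces to $U^aZ^c$ or $V^bZ^c$ with $0\le c\le q-2$. The rewriting $UV\to Z^{q-1}$ does eliminate one of $U,V$ from each monomial, but you cannot then also force $c\le q-2$: reducing $Z^{q-1}$ reintroduces the product $UV$. For instance $U^2V\equiv UZ^{q-1}$, and the class of $UZ^{q-1}$ does \emph{not} lie in the span of $U^3,V^3$ (the only elements of the ideal of weight $3(q-1)$, type $0$ are $(\alpha U+\beta V)(UV-Z^{q-1})$, and none of them equals $UZ^{q-1}-\lambda U^3-\mu V^3$). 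Moreover, with your restriction $0\le c\le q-2$ the congruence $c\equiv l\pmod{q-1}$ forces $c=l$, so in weight $k$, type $l$ your set consists of only the two monomials $U^{r_{k,l}}Z^{l}$ and $V^{r_{k,l}}Z^{l}$ --- not $1+r_{k,l}$ of them. So the set you exhibit neither spans $B$ nor has the cardinality you assert.

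The fix is simply to drop the bound on $c$: the correct spanning set is $\{U^aZ^c: a,c\ge 0\}\cup\{V^bZ^c: b\ge 1,\ c\ge 0\}$. Writing $c=l+m(q-1)$, the weight condition becomes $a+2m=r_{k,l}$ (resp.\ $b+2m=r_{k,l}$ with $b\ge 1$), which gives exactly $1+r_{k,l}$ monomials in each graded piece, and under $\bar\vartheta$ they map bijectively onto the basis $S$, since $U^aZ^{l+m(q-1)}\mapsto \Delta_W^{a}(\Delta_W\Delta_T)^mE_T^l=\Delta_W^{r_{k,l}-m}\Delta_T^mE_T^l$ and $V^bZ^{l+m(q-1)}\mapsto \Delta_W^m\Delta_T^{r_{k,l}-m}E_T^l$. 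With that correction your rank-count does close the argument, and it is a cleaner packaging than the paper's, which instead takes an arbitrary homogeneous $f=\sum_i c_iU^{x_i}V^{y_i}Z^{l+m_i(q-1)}$ in the kernel, uses $E_T^{q-1}=\Delta_W\Delta_T$ and the algebraic independence of $\Delta_W,\Delta_T$ to force the coefficient sums over each fiber $I_{(a,b)}$ to vanish, and then exhibits $f$ explicitly as a multiple of $UV-Z^{q-1}$. Two smaller points: your surjectivity step still needs the paper's coefficient comparison, since Proposition~\ref{IMP_Prop}(\ref{P3}) only says the elements of $S$ have coefficients in $A$; one must use the upper-triangular $u$-expansions $\Delta_W^{r_{k,l}-j}\Delta_T^jE_T^l=u^{j(q-1)+l}+\cdots$ to see that the $C$-coefficients of $f\in M_{k,l}(\Gamma_0(T))_R$ in this basis actually lie in $R$. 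And your parenthetical identification ``$U^0Z^c=V^0Z^c$'' is harmless but should be stated as listing that monomial once, not as a relation in $B$.
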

To prove Theorem~\ref{MT_1}, first we need to introduce the notion of weight, type for a polynomial in $R[U,V,Z]$.
\begin{dfn}
\label{Definition of weight and type of a polynomial}
Let $k\in \N\cup \{0\}$ and $l\in \Z/(q-1)\Z$.
A polynomial $\varphi(U,V,Z)= \sum_{i=1}^n c_i U^{x_i}V^{y_i}Z^{l_i}\in R[U,V,Z]$ is said to be of 
weight $k$, type $l$  if 
\begin{enumerate}
\item $(x_i+y_i)(q-1)+2l_i=k$, $ \forall 1\leq i \leq n$ and
\item $l_1 \equiv l_2 \equiv \cdots \equiv l_n \equiv l \pmod {q-1}$.
\end{enumerate}
\end{dfn}
\begin{remark}
Definition~\ref{Definition of weight and type of a polynomial} is also valid for polynomials over $\F_\mfp$.
By definition, the variables $U, V$ are of weight $q-1$, type $0$ and $Z$ is of weight $2$, type $1$.
\end{remark}

Let $R[U,V,Z]_{k,l}$ denote the set of polynomials of weight $k$, type $l$. Then
\begin{equation}
\label{decomposition of C[U,V,Z]}
R[U,V,Z] = \bigoplus_{\substack{k\in \N\cup \{0\}\\l\in \Z/(q-1)\Z}} R[U,V,Z]_{k,l}.
\end{equation}

\begin{proof}[Proof of Theorem~\ref{MT_1}]
For simplicity, we denote $\vartheta(R)$ by $\vartheta$.
Let $f\in M_{k,l}(\Gamma_0(T))_R$.
 Since $B:=\{\Delta_W^{r_{k,l}-j}\Delta_T^j E_T^l| 0 \leq j \leq r_{k,l} \}$ is a basis for the $C$-vector space $M_{k,l}(\Gamma_0(T))$ (cf. Proposition~\ref{IMP_Prop}~\eqref{P3}),  $f$ can be written as
  \begin{equation}
  \label{expression of modular forms of any type}
  f= c_0\Delta_W^{r_{k,l}}E_T^l+ c_1\Delta_W^{r_{k,l}-1}\Delta_TE_T^l+ \cdots+c_{r_{k,l}} \Delta_T^{r_{k,l}} E_T^l  \ \mathrm{with} \ c_j\in C.
  \end{equation}
  For $0 \leq j \leq r_{k,l}$, the $u$-expansion of elements of $B$ is given by
  $$ \Delta_W^{r_{k,l}-j}\Delta_T^j E_T^l = u^{j(q-1)+l}+\cdots.$$
  Comparing the coefficients in $u$-expansions on both sides of~\eqref{expression of modular forms of any type} we get $c_j\in R$ for all $j\in \{0,\ldots, r_{k,l}\}$. This shows that the map $\vartheta$ is surjective. 
In particular, this implies that the  $R$-algebra $M(\Gamma_0(T))_R$ is generated by $\Delta_W, \Delta_T$ and $E_T$.


It remains to show that $ \ker\vartheta = (UV-Z^{q-1})$.
By Proposition~\ref{IMP_Prop}(\ref{P5}), we get $(UV-Z^{q-1}) \subseteq \ker\vartheta$.
To show the equality, by~\eqref{decomposition of C[U,V,Z]}, it suffices to show that for any $f\in R[U,V,Z]_{k,l}$ such that $\vartheta(f)=0$ then $f\in (UV-Z^{q-1})$. Recall that $0\leq l\leq q-2$ is a lift of $l\in \Z/(q-1)\Z$.

Let $f=\sum_{i=0}^n c_iU^{x_i}V^{y_i}Z^{l_i}\in R[U,V,Z]_{k,l}$ be a polynomial of weight $k$, type $l$
such that $\vartheta(f)=0$.
The condition $(2)$ in Definition~\ref{Definition of weight and type of a polynomial} implies that $f$ can be written as $f = \sum_{i=0}^n c_iU^{x_i}V^{y_i}Z^{l+m_i(q-1)}$ for some $m_i \in \Z$.
By Proposition~\ref{IMP_Prop}(\ref{P5}), we have
\begin{equation}
\label{Image of weight k type l polynomial to a modular form}
\vartheta(f)=E_T^l\cdot\sum_{i=0}^n c_i\Delta_W^{x_i+m_i}\Delta_T^{y_i+m_i}=0.
\end{equation}
Define  $S:= \{y_i+m_i \mid 0\leq i \leq n\}$, $T:= \{x_i+m_i \mid 0\leq i \leq n\}$. 
For $(a,b)\in S\times T$, we define 
\begin{equation}
\label{relation between the exponents for distinct sum}
 I_{(a,b)} :=\{i\ | \ y_i+m_i=a, \ x_i+m_i=b\}.
\end{equation}
The expression in~\eqref{Image of weight k type l polynomial to a modular form} is reduced to 
$E_T^l\cdot \sum_{(a,b) \in S\times T} c_{a,b} \Delta_W^b\Delta_T^a=0,$
where $c_{a,b}= \sum_{i\in I_{(a,b)}} c_i$.
Since $\Delta_W$, $\Delta_T$ are algebraically independent (cf. Proposition~\ref{IMP_Prop}~\eqref{P2})
we get 
\begin{equation}
\label{c_a,b =0 gives sum of c_j;a,b=0}
c_{a,b}= \sum_{i\in I_{(a,b)}} c_i=0 \quad  \mrm{for} \ (a,b)\in S \times  T.
\end{equation}
Let $f_{a,b} := \sum_{i\in I_{(a,b)}} c_i U^{x_i}V^{y_i}Z^{l+{m_i}(q-1)}$ for   
$(a,b)\in S \times  T$. Since $f= \sum_{(a,b)\in S \times  T} f_{a,b}$, it is enough to show that $f_{a,b} \in (UV-Z^{q-1})$.

Let $m_r = \inf_{i\in I_{(a,b)}} m_i$. 
By~\eqref{c_a,b =0 gives sum of c_j;a,b=0}, the expression for $f_{a,b}$ becomes
\begin{equation}
\label{Big_expression}
\sum_{\substack{i\in I_{(a,b)}\\i\ne r}} c_i U^{x_i}V^{y_i}Z^{l+{m_i(q-1)}} - (\sum_{\substack{{i\in I_{(a,b)}}\\ i\neq r}} c_i)
U^{x_r} V^{y_r}Z^{l+{m_r}(q-1)}. 
\end{equation}
For simplicity, we let $x(j;i) := x_j-x_i$,  $y(j;i) := y_j-y_i$, and $m(i;j) := m_i-m_j$.
Then, the expression in~\eqref{Big_expression} reduced to
$$Z^l \sum_{\substack{i\in I_{(a,b)}\\i\ne r}} c_iZ^{m_r(q-1)}U^{x_i}V^{y_i} 
\Big(Z^{(q-1)m(i;r)} - U^{x(r;i)} V^{y(r;i)}\Big)$$

By~\eqref{relation between the exponents for distinct sum}, we see that 
$x(r;i) =  y(r;i) = m(i;r)$ for $i\in I_{(a,b)}$.
Hence, the expression in the last bracket becomes 
$Z^{(q-1)m(i;r)} - (UV)^{m(i;r)}$, which is a multiple of $Z^{q-1}-UV$.
This implies $f_{a,b}\in (UV-Z^{q-1})$  for all $(a,b) \in S \times T$
and this proves the claim.
\end{proof}

 By Theorem~\ref{MT_1}, for  $f\in M_{k,l}(\Gamma_0(T))_R$, there exists a polynomial $\eta(U,V,Z)\in R[U,V,Z]$ such that $f=\eta(\Delta_W,\Delta_T,E_T)$. Unfortunately, the polynomial $\eta$ is not unique since $\ker \vartheta(R) \neq 0$. 
 
 In the next section, we show that for each $f\in M_{k,l}(\Gamma_0(T))_R$ there exists a unique polynomial $\varphi_f(U,V)\in R[U,V]$ such that $f = \varphi_{f}(\Delta_W, \Delta_T) E_T^l$.
To do this, we need to describe the structure of the $R$-algebra $M^0(\Gamma_0(T))_R$.

\subsection{On the structure of $M^0(\Gamma_0(T))_R$:}
\label{Structure of M0(Gamma0(T)) subsection}
As a consequence of Proposition~\ref{IMP_Prop}, we produce an explicit set of generators for 
the $R$-algebra $M^0(\Gamma_0(T))_R$. 
\begin{thm}
\label{MT_2}
The map 
$$\vartheta_0 (R): R[U,V]\longrightarrow M^0(\Gamma_0(T))_R$$ 
defined by $(U,V) \rightarrow (\Delta_W, \Delta_T)$
is an isomorphism. 
In particular, the $R$-algebra $M^0(\Gamma_0(T))_R$ is generated by $\Delta_W, \Delta_T$,
i. e., $M^0(\Gamma_0(T))_R = R[\Delta_W, \Delta_T]$.

\end{thm}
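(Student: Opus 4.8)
The plan is to exhibit $\vartheta_0(R)$ as the restriction of the already-understood map $\vartheta(R)$ from Theorem~\ref{MT_1}, and to read off both surjectivity and injectivity from the grading by weight and type. Surjectivity is essentially immediate: given $f \in M_{k,0}(\Gamma_0(T))_R$, Proposition~\ref{IMP_Prop}(\ref{P3}) (with $l=0$) shows the set $\{\Delta_W^{r_{k,0}-j}\Delta_T^j : 0 \le j \le r_{k,0}\}$ is a basis of $M_{k,0}(\Gamma_0(T))$ over $C$, and comparing $u$-expansions (the $j$-th basis element has leading term $u^{j(q-1)}$, so the transition matrix is unitriangular) forces the coefficients to lie in $R$. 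Hence $f = \varphi_f(\Delta_W,\Delta_T)$ for some $\varphi_f \in R[U,V]$, and $M^0(\Gamma_0(T))_R = R[\Delta_W,\Delta_T]$.

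For injectivity I would argue that $\ker \vartheta_0(R) = 0$ directly from the algebraic independence of $\Delta_W$ and $\Delta_T$, which is Proposition~\ref{IMP_Prop}(\ref{P2}). Concretely, suppose $\psi(U,V) \in R[U,V]$ satisfies $\psi(\Delta_W,\Delta_T) = 0$. Decomposing $\psi = \sum_k \psi_k$ into its weight-$k$ homogeneous pieces (where $U,V$ both have weight $q-1$, so $\psi_k$ collects the monomials $U^xV^y$ with $(x+y)(q-1)=k$), and using that $\vartheta_0(R)$ sends $R[U,V]_{k,0}$ into $M_{k,0}(\Gamma_0(T))$ and that $M(\Gamma_0(T))_R$ is graded, we get $\psi_k(\Delta_W,\Delta_T)=0$ for each $k$ separately. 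But a weight-$k$ polynomial in two variables that both have the same weight is the same thing as an arbitrary homogeneous polynomial of degree $r_{k,0}$ in $U,V$; since $\Delta_W,\Delta_T$ are algebraically independent, $\psi_k = 0$. Thus $\psi = 0$.

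Alternatively — and perhaps more cleanly, since the machinery is already built — I would deduce injectivity from Theorem~\ref{MT_1}: the inclusion $R[U,V] \hookrightarrow R[U,V,Z]$ composed with $\vartheta(R)$ equals $\vartheta_0(R)$ (both send $U \mapsto \Delta_W$, $V \mapsto \Delta_T$), and $\ker\vartheta(R) = (UV - Z^{q-1})$. So $\ker\vartheta_0(R) = R[U,V] \cap (UV - Z^{q-1})$, and one checks this intersection is zero: the ideal $(UV-Z^{q-1})$ is generated by an element genuinely involving $Z$, and more precisely, regarding $R[U,V,Z]$ as a free $R[U,V]$-module with basis $\{1, Z, \dots, Z^{q-2}\}$ over the subring $R[U,V][UV - Z^{q-1}]$ — or simply noting that any nonzero multiple $g(U,V,Z)(UV-Z^{q-1})$ has $Z$-degree $\ge q-1 > 0$ unless $g=0$ — we see nothing nonzero in $R[U,V]$ lies in the ideal. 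Either route finishes the proof.

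The only real subtlety, and hence the step to watch, is the bookkeeping in the surjectivity argument: one must be sure that $r_{k,0} = \frac{k}{q-1}$ is a nonnegative integer exactly when $M_{k,0}(\Gamma_0(T)) \ne 0$ (so that the indexing of the basis makes sense and matches Proposition~\ref{Dimension Gamma0(T)}), and that the $u$-expansion leading terms of $\Delta_W^{r_{k,0}-j}\Delta_T^j$ really are $u^{j(q-1)} + \cdots$ with unit leading coefficient — this is exactly what Proposition~\ref{IMP_Prop}(\ref{P3}) and the displayed $u$-expansions of $\Delta_W$ ($= 1 + \cdots$) and $\Delta_T$ ($= u^{q-1} + \cdots$) provide. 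Given that input the descent of coefficients from $C$ to $R$ is the standard unitriangular argument, already used verbatim in the proof of Theorem~\ref{MT_1}, so there is no new obstacle; the proof is genuinely a corollary of the two preceding results.
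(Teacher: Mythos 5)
Your proof is correct and follows essentially the same route as the paper: surjectivity via the integral basis $\{\Delta_W^{r_{k,0}-j}\Delta_T^j\}$ and the unitriangular descent of coefficients to $R$ (the paper cites its equation~\eqref{expression of modular forms of any type} with $l=0$), and injectivity from the algebraic independence of $\Delta_W,\Delta_T$ in Proposition~\ref{IMP_Prop}(\ref{P2}). Your alternative kernel computation via Theorem~\ref{MT_1} is a valid extra, but the main argument is the paper's own.
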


\begin{proof}
The surjectivity of $\vartheta_0 (R)$ is clear from~\eqref{expression of modular forms of any type} by choosing $l=0$. The injectivity of $\vartheta_0 (R)$ follows from Proposition~\ref{IMP_Prop}(\ref{P2}).
\end{proof}

We now introduce the notion of isobaric polynomial in $R[U,V]$.
\begin{dfn}
\label{Definition of weight and type zero of a polynomial}
A polynomial $\varphi(U,V)= \sum_{i=1}^nc_iU^{x_i}V^{y_i}\in R[U,V]$ is said to be an isobaric polynomial of weight $k\in \N\cup \{0\}$ if $(x_i+y_i)(q-1)=k$ for all $1\leq i \leq n$.  
\end{dfn}
\begin{remark}
Definition~\ref{Definition of weight and type zero of a polynomial} is also valid for polynomials over $\F_\mfp$.
By definition, the variables $U, V$ are isobaric polynomials of weight $q-1$.
\end{remark}




By Theorem~\ref{MT_2}, for  $f\in M_{k,0}(\Gamma_0(T))_R$,
there exists a unique polynomial $\varphi_f(U,V)=\sum_{i=1}^nc_iU^{x_i}V^{y_i}\in R[U,V]$ such that $f= \varphi_f(\Delta_W, \Delta_T)=\sum_{i=1}^nc_i\Delta_W^{x_i}\Delta_T^{y_i}$. Since the weight of $f$ is $k$, we  have $(x_i+y_i)(q-1)=k$ for all $1\leq i \leq n$. Hence the polynomial $\varphi_f(U,V)$ is an isobaric polynomial of weight $k$. 
These polynomials play an important role in rest of the article.

The novelty of our work is that we use Theorem~\ref{MT_2} to show that for each $f\in M_{k,l}(\Gamma_0(T))_R$ there exists a unique polynomial $\varphi_f(U,V) \in R[U,V]$
such that $f = \varphi_f(\Delta_W, \Delta_T) E_T^l$. This is  done as follows.

To each $f\in M_{k,l}(\Gamma_0(T))_R$, we define $c_T(f) :=\frac{f}{E_T^l}\in M_{k-2l,0}(\Gamma_0(T))$ (cf. Proposition~\ref{IMP_Prop}(\ref{P4})). 
Since the $u$-expansion of $E_T^l$ at $\infty$ is $u^l(1+\cdots)\in A[[u]]$, 
an easy verification shows that $c_T(f)\in M_{k-2l,0}(\Gamma_0(T))_R$.
By Theorem~\ref{MT_2}, there exists a unique isobaric polynomial $\varphi_{c_T(f)}(U,V)\in R[U,V]$ of weight $k-2l$ such that 
$c_T(f)=\varphi_{c_T(f)}(\Delta_W, \Delta_T).$ This implies that $f = \varphi_{c_T(f)}(\Delta_W, \Delta_T) E_T^l$.
For simplicity, we denote $\varphi_{c_T(f)}(U,V)$ by $\varphi_{f}(U,V)$. We summarize the  discussion above as a proposition.
 

\begin{prop}
\label{coefficient of the u-series and the polynomial for arbitrary type}
\label{Isobaric polynomial for arbitrary type}
For $f\in M_{k,l}(\Gamma_0(T))_R$, there exists a unique isobaric polynomial $\varphi_{f}(U,V)\in R[U,V]$
 such that $c_T(f)=\varphi_{f}(\Delta_W, \Delta_T)$. Hence, $f=\varphi_{f}(\Delta_W, \Delta_T)E_T^l$. 
\end{prop}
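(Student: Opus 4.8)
The plan is to deduce Proposition~\ref{Isobaric polynomial for arbitrary type} directly from Proposition~\ref{IMP_Prop}(\ref{P4}) and Theorem~\ref{MT_2}, with a small bookkeeping step to verify integrality of the coefficients. First I would invoke Proposition~\ref{IMP_Prop}(\ref{P4}): multiplication by $E_T^l$ gives an isomorphism $M_{k-2l,0}(\Gamma_0(T)) \xrightarrow{\sim} M_{k,l}(\Gamma_0(T))$ of $C$-vector spaces, so for $f \in M_{k,l}(\Gamma_0(T))_R$ there is a well-defined element $c_T(f) := f/E_T^l \in M_{k-2l,0}(\Gamma_0(T))$. The first genuine point to check is that $c_T(f)$ actually has coefficients in $R$, i.e.\ $c_T(f) \in M_{k-2l,0}(\Gamma_0(T))_R$. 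This follows because the $u$-expansion of $E_T^l$ at $\infty$ is $u^l(1 + \cdots) \in A[[u]]$, whose leading-order factor $1 + \cdots$ is a unit in $R[[u]]$; dividing the $u$-expansion of $f$ (which lies in $R[[u]]$) by $u^l$ and then by this unit keeps all coefficients in $R$. One should note here that the division by $u^l$ is legitimate because $a_f(i) = 0$ for $i \not\equiv l \pmod{q-1}$ and, more to the point, $f$ vanishes to order at least $l$ at $\infty$, which is exactly what the isomorphism in Proposition~\ref{IMP_Prop}(\ref{P4}) encodes.

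Next I would apply Theorem~\ref{MT_2} to $c_T(f) \in M_{k-2l,0}(\Gamma_0(T))_R$: since $\vartheta_0(R) : R[U,V] \to M^0(\Gamma_0(T))_R$ is an isomorphism, there is a \emph{unique} polynomial $\varphi_{c_T(f)}(U,V) \in R[U,V]$ with $c_T(f) = \varphi_{c_T(f)}(\Delta_W, \Delta_T)$. Because $c_T(f)$ is homogeneous of weight $k-2l$ and $\Delta_W, \Delta_T$ each have weight $q-1$, the grading on $M^0(\Gamma_0(T))_R$ transported through the isomorphism forces every monomial $U^{x_i}V^{y_i}$ appearing in $\varphi_{c_T(f)}$ to satisfy $(x_i + y_i)(q-1) = k - 2l$; that is, $\varphi_{c_T(f)}$ is isobaric of weight $k-2l$ in the sense of Definition~\ref{Definition of weight and type zero of a polynomial}. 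Multiplying both sides by $E_T^l$ then yields $f = \varphi_{c_T(f)}(\Delta_W, \Delta_T) E_T^l$, and renaming $\varphi_f := \varphi_{c_T(f)}$ gives the statement.

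Uniqueness of $\varphi_f$ is immediate: if $\varphi(\Delta_W, \Delta_T) E_T^l = \psi(\Delta_W, \Delta_T) E_T^l$ for two polynomials $\varphi, \psi \in R[U,V]$, then since $E_T^l$ is a nonzero element of the integral domain $M(\Gamma_0(T))_C$ (indeed $E_T$ vanishes only at the cusps, by Proposition~\ref{IMP_Prop}(\ref{P5})), we may cancel it to get $\varphi(\Delta_W, \Delta_T) = \psi(\Delta_W, \Delta_T)$ in $M^0(\Gamma_0(T))_R$, whence $\varphi = \psi$ by the injectivity half of Theorem~\ref{MT_2} (equivalently, by the algebraic independence of $\Delta_W, \Delta_T$ from Proposition~\ref{IMP_Prop}(\ref{P2})).

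The only step requiring any care — and hence the main obstacle, though it is mild — is the integrality claim $c_T(f) \in M_{k-2l,0}(\Gamma_0(T))_R$: one must be sure that dividing the $u$-expansion of $f$ by that of $E_T^l$ does not introduce denominators. This rests on the two facts that $E_T = u - Tu^q + \cdots \in A[[u]]$ with unit leading coefficient and that $f$, being in the image of multiplication by $E_T^l$, vanishes to order $\geq l$ at $\infty$; together these make $f/E_T^l$ a power series in $R[[u]]$, after which Theorem~\ref{MT_2} does the rest. Everything else is a formal consequence of results already established.
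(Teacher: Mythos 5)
Your proposal is correct and follows essentially the same route as the paper: define $c_T(f)=f/E_T^l$ via Proposition~\ref{IMP_Prop}(\ref{P4}), verify $R$-integrality using the $u$-expansion $E_T^l = u^l(1+\cdots)\in A[[u]]$, and then apply Theorem~\ref{MT_2} to get the unique isobaric polynomial. Your added remarks on uniqueness (cancelling $E_T^l$ and invoking the algebraic independence of $\Delta_W,\Delta_T$) only make explicit what the paper leaves implicit.
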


We now record a useful lemma.
\begin{lem}
\label{Congruence between forms of arbitrary type resuces to congruence between trivial type}
	For $i=1,2$, if $f_i\in M_{k_i,l_i}(\Gamma_0(T))$ such that at least one of $f_1,f_2$ is non-zero modulo $\mfp$.
	Then, $f_1 \equiv f_2 \pmod \p$ if and only if $l_1\equiv l_2\pmod {q-1}$ and $c_T(f_1)\equiv c_T(f_2) \pmod \p$.
\end{lem}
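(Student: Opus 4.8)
The plan is to reduce the congruence between forms of arbitrary type to one between forms of type $0$ via the function $E_T$, using Proposition~\ref{Isobaric polynomial for arbitrary type} and the fact that $E_T$ has a particularly harmless $u$-expansion. First I would record the key structural facts: by Proposition~\ref{IMP_Prop}(\ref{P4}) we can write $f_i = c_T(f_i) E_T^{l_i}$ with $c_T(f_i) \in M_{k_i - 2l_i, 0}(\Gamma_0(T))$, and the $u$-expansion of $E_T$ at $\infty$ is $u - Tu^q + \cdots \in A[[u]]$, so $E_T^l$ has $u$-expansion $u^l(1 + \cdots)$ with leading coefficient a unit (namely $1$) and all coefficients in $A$. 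In particular $E_T^l$ is $\p$-integral, and more importantly it is invertible in the power series ring $A_\p[[u]]$ up to the factor $u^l$; this invertibility is what lets us pass back and forth between $f_i$ and $c_T(f_i)$ modulo $\p$ without losing information.

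For the ``if'' direction, suppose $l_1 \equiv l_2 \pmod{q-1}$ and $c_T(f_1) \equiv c_T(f_2) \pmod{\p}$. Writing $l := l_1$ (the common lift in $\{0,\dots,q-2\}$, which equals that of $l_2$ since both forms are type-$\equiv l$), we have $f_1 - f_2 = \big(c_T(f_1) - c_T(f_2)\big) E_T^{l}$; since $E_T^l \in A[[u]]$ and $c_T(f_1) - c_T(f_2) \in \p \cdot A_\p[[u]]$, the product lies in $\p \cdot A_\p[[u]]$, i.e. $f_1 \equiv f_2 \pmod{\p}$. (One should note here that $a_{f_i}(n) = 0$ unless $n \equiv l_i \pmod{q-1}$, so if $l_1 \not\equiv l_2$ the two $u$-expansions are supported on disjoint arithmetic progressions and can only be congruent mod $\p$ if both reduce to $0$, which is excluded by hypothesis — this also handles part of the contrapositive for the ``only if'' direction.)

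For the ``only if'' direction, suppose $f_1 \equiv f_2 \pmod{\p}$ with, say, $f_1 \not\equiv 0 \pmod \p$. The disjoint-support observation above forces $l_1 \equiv l_2 \pmod{q-1}$: indeed the nonzero reduction $\overline{f_1}$ is supported on $n \equiv l_1$, while $\overline{f_2} = \overline{f_1}$ is supported on $n \equiv l_2$, so $l_1 \equiv l_2$. Now with $l := l_1 = l_2$ we have $f_1 - f_2 = \big(c_T(f_1) - c_T(f_2)\big) E_T^{l} \equiv 0 \pmod \p$. Let $g := c_T(f_1) - c_T(f_2)$, a type-$0$ form with $\p$-integral coefficients (each $c_T(f_i)$ is $\p$-integral as noted in the text). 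We must deduce $g \equiv 0 \pmod \p$ from $g \cdot E_T^l \equiv 0 \pmod \p$. Since $E_T^l = u^l \cdot v$ with $v \in 1 + u A[[u]]$ a unit in $A_\p[[u]]$, multiplication by $E_T^l$ on $u$-series is injective modulo $\p$ (it is injective on $\F_\p[[u]]$, being multiplication by $u^l \bar v$ with $\bar v$ a unit); hence $\bar g \bar v u^l = 0$ in $\F_\p[[u]]$ forces $\bar g = 0$, i.e. $g \equiv 0 \pmod \p$, which is $c_T(f_1) \equiv c_T(f_2) \pmod \p$.

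\textbf{Main obstacle.} The only genuinely delicate point is the cancellation step — arguing that $g \cdot E_T^l \equiv 0 \pmod \p$ implies $g \equiv 0 \pmod \p$ — and this hinges entirely on $E_T^l$ having unit leading $u$-coefficient, so that it is a non-zero-divisor in $\F_\p[[u]]$. Everything else is bookkeeping with the arithmetic-progression support condition on $u$-expansions and the integrality of $c_T(f_i)$, both already established in the preliminaries.
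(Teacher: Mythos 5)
Your proposal is correct and follows essentially the same route as the paper's proof: both directions reduce to multiplying or cancelling the factor $E_T^l$, whose $u$-expansion $u^l(1+\cdots)\in A[[u]]$ has unit leading coefficient, and the type congruence $l_1\equiv l_2\pmod{q-1}$ is extracted from the arithmetic-progression support of the $u$-expansions exactly as in the text. Your write-up merely makes explicit the injectivity of multiplication by $E_T^l$ on $\F_\p[[u]]$ and the role of the hypothesis that some $f_i\not\equiv 0\pmod\p$, both of which the paper leaves implicit.
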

\begin{proof}
Let $0\leq l \leq q-2$ be an integer such that $l_1 \equiv l_2 \equiv l \pmod {q-1}$  and $c_T(f_1)\equiv c_T(f_2) \pmod \p$. Since $E_T^l$ has coefficients in $A$, the congruence $c_T(f_1)\equiv c_T(f_2) \pmod \p$ gives $c_T(f_1)E_T^l\equiv c_T(f_2)E_T^l \pmod \p$, i.e., $f_1\equiv f_2 \pmod \p$.

Conversely, we suppose  $f_1\equiv f_2 \pmod \p$. 
The $u$-expansions of $f_1$ and $f_2$ imply that $l_1\equiv l_2 \equiv l \pmod {q-1}$ for $0\leq l \leq q-2$.  Since $f_i=c_T(f_i)E_T^l$,
one gets $c_T(f_1)E_T^l\equiv c_T(f_2)E_T^l \pmod \p$.
The $u$-expansion of $E_T^l$ at $\infty$ is $u^l(1+\cdots)\in A[[u]]$, so we obtain $c_T(f)\equiv c_T(g) \pmod \p$. 
\end{proof}
By Lemma~\ref{Congruence between forms of arbitrary type resuces to congruence between trivial type}, 
the  congruence between Drinfeld modular forms of arbitrary type reduces to the congruence between Drinfeld modular forms of type $0$.
Thus, the study of the weight filtration heavily depends on the structure of mod-$\p$ reduction of $M_\mfp^0(\Gamma_0(T))$, which is the content of the following section. 

\subsection{On the structure of mod-$\p$ reduction of $M_{\mfp}^0(\Gamma_0(T))$:}
\label{structure_particular_reduction}
We now describe the structure of $M_{\mfp}^0(\Gamma_0(T))$ under mod $\p$-reduction with $\p \ne (T)$. 

\begin{dfn}
Let $M_\p^0(\Gamma_0(T))$ denote the ring of Drinfeld modular forms for $\Gamma_0(T)$ of  type $0$
with $\p$-integral $u$-expansion at $\infty$. We define
$$\overline{M_{\p}^0}(\Gamma_0(T)) := \{\overline{f} \in \F_\p[[u]] \mid \exists f \in M_\p^0(\Gamma_0(T)) \ \mrm{such\  that} \ f\equiv \overline{f} \pmod\p \}.$$
\end{dfn}

Let $\phi_d(U,V)$ be the unique isobaric polynomial of weight $q^d-1$ attached to $g_d\in M_{q^d-1,0}(\Gamma_0(T))$ such that $g_d= \phi_d(\Delta_W, \Delta_T)$. Let $\overline{\phi}_d(U,V) := {\phi}_d(U,V) \pmod \p$.

\begin{thm}
\label{MT_3}
The surjective map $\iota : \F_\p[U,V]  \ras \overline{M_{\p}^0}(\Gamma_0(T))$
defined by $U \ra \overline{\Delta}_W$ and $V \ra  \overline{\Delta}_T$ induces an isomorphism 
\begin{equation*}
\F_\p[U,V]/(\overline{\phi}_d(U,V)-1) \cong \overline{M_{\p}^0}(\Gamma_0(T)).
\end{equation*}
\end{thm}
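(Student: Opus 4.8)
The plan is to mirror Gekeler's argument for $\GL_2(A)$ (cf.~\cite[Corollary 12.4]{Gek88}), transported along the structure theorem $M^0(\Gamma_0(T))_A = A[\Delta_W,\Delta_T]$ (Theorem~\ref{MT_2}). First I would check that $\iota$ is well-defined and surjective: since $\Delta_W,\Delta_T\in A[[u]]$ have $\p$-integral $u$-expansions, every polynomial in $\overline{\Delta}_W,\overline{\Delta}_T$ lies in $\overline{M_\p^0}(\Gamma_0(T))$; conversely, if $\overline{f}$ is the reduction of some $f\in M_\p^0(\Gamma_0(T))$, then $f$ is an $A_\p$-linear combination of the basis elements $\Delta_W^{r-j}\Delta_T^j$ (Proposition~\ref{IMP_Prop}(\ref{P3})), and comparing $u$-expansions shows the coefficients are $\p$-integral, so $\overline{f}=\iota(\varphi_f \bmod \p)$. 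Next, $(\overline{\phi}_d(U,V)-1)\subseteq \ker\iota$: indeed $g_d$ has $u$-expansion $1+\cdots\in A[[u]]$ and is congruent to $1$ modulo $\p$ because, by its defining sum, $g_d\equiv \sum_{(a,b)\ne(0,0)}(az+b)^{-(q^d-1)}$ raised appropriately — more precisely one invokes the standard fact $g_d\equiv 1\pmod\p$ for $d=\deg\p$ (the Drinfeld analogue of $E_{p-1}\equiv 1$), whence $\phi_d(\Delta_W,\Delta_T)\equiv 1$, i.e.\ $\overline{\phi}_d(\overline{\Delta}_W,\overline{\Delta}_T)=1$ in $\F_\p[[u]]$.

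The core of the proof is the reverse inclusion $\ker\iota\subseteq(\overline{\phi}_d(U,V)-1)$. I would argue as follows. Suppose $\psi(U,V)\in\F_\p[U,V]$ with $\iota(\psi)=0$. Write $\psi$ as a sum of its isobaric components $\psi=\sum_k \psi_k$ with $\psi_k$ isobaric of weight $k$; since $\overline{\Delta}_W,\overline{\Delta}_T$ are isobaric of weight $q-1$ the images $\iota(\psi_k)$ are "graded pieces" but the grading collapses modulo the relation $\overline{\phi}_d-1$ because $\overline{\phi}_d$ is isobaric of weight $q^d-1$ — this is exactly the mechanism by which the quotient ring is no longer graded. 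The key homogenization step: multiply each $\psi_k$ by a suitable power of $\overline{\phi}_d$ to bring all components to a common weight $N$ (a common multiple), changing $\psi$ only by an element of $(\overline{\phi}_d-1)$; so we may assume $\psi$ itself is isobaric of weight $N$ and $\iota(\psi)=0$. Now $\psi(\overline{\Delta}_W,\overline{\Delta}_T)=0$ in $\F_\p[[u]]$ forces, by lifting, $\psi(\Delta_W,\Delta_T)\equiv 0\pmod\p$; but $\psi(\Delta_W,\Delta_T)$ is then a genuine Drinfeld modular form of weight $N$ and type $0$ over $A_\p$ that is $\equiv 0$, so $\psi\bmod\p$ must already be the zero element of $M_{N,0}(\Gamma_0(T))_{\F_\p}$. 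By algebraic independence of $\Delta_W,\Delta_T$ over $C$ (Proposition~\ref{IMP_Prop}(\ref{P2})) — which I would need to upgrade to the statement that $\overline{\Delta}_W,\overline{\Delta}_T$ are algebraically independent over $\F_\p$, equivalently that $M^0_{\mf p}(\Gamma_0(T))$ reduces with the expected dimension — this gives $\psi\equiv 0$, i.e.\ the original $\psi\in(\overline{\phi}_d-1)$.

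The main obstacle is precisely that algebraic-independence-after-reduction claim, i.e.\ showing $\ker\iota$ contains \emph{nothing} beyond $(\overline{\phi}_d-1)$ in each weight. Equivalently, one must show that $\overline{\Delta}_W$ and $\overline{\Delta}_T$ satisfy no isobaric polynomial relation of weight $< q^d-1$: if they did, the quotient $\F_\p[U,V]/(\overline{\phi}_d-1)$ would have smaller dimension in low weights than $M_{k,0}(\Gamma_0(T))$ predicts. The clean way to see this is a $u$-expansion / Newton-polygon argument: from $\Delta_T=u^{q-1}-\cdots$ and $\Delta_W=1+Tu^{q-1}-\cdots$ one reads off that, within a fixed weight $k=(q-1)r$, the family $\{\overline{\Delta}_W^{r-j}\overline{\Delta}_T^{j}\}_{0\le j\le r}$ has $u$-expansions with distinct leading exponents $j(q-1)$ and leading coefficients $1$ (these coefficients are in $A$ and are units, hence nonzero mod $\p$ since $\p\neq(T)$ plays no role here — they are literally $\pm1$), so the matrix of $u$-coefficients is unitriangular mod $\p$ and the family stays linearly independent over $\F_\p$. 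Therefore $\dim_{\F_\p}\iota(\F_\p[U,V]_{k,0}) = r+1 = \dim_C M_{k,0}(\Gamma_0(T))$, which forces the kernel to be as small as possible; tracking this across all weights and combining with the homogenization step yields $\ker\iota=(\overline{\phi}_d(U,V)-1)$, completing the proof.
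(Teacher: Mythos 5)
Your overall strategy (surjectivity from Theorem~\ref{MT_2}, the inclusion $(\overline{\phi}_d-1)\subseteq\ker\iota$ from $g_d\equiv 1\pmod\p$, then a direct computation of $\ker\iota$ by isobaric decomposition) is a genuinely different route from the paper, which instead observes that $\ker\iota$ is a non-maximal prime of the two-dimensional ring $\F_\p[U,V]$ containing $(\overline{\phi}_d-1)$, and reduces everything to showing $\overline{\phi}_d(U,V)-1$ is irreducible — which is where the square-freeness of $\overline{\phi}_d$ (Proposition~\ref{square free}) enters. Your route avoids irreducibility altogether, and your $u$-expansion argument correctly shows that the monomials $\overline{\Delta}_W^{\,r-j}\overline{\Delta}_T^{\,j}$ of a fixed weight are linearly independent over $\F_\p$ (unitriangular coefficient matrix), i.e.\ that $\ker\iota$ contains no nonzero isobaric polynomial.

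However, there is a genuine gap in the homogenization step, and it is exactly the hard point of the theorem. Multiplying an isobaric component of weight $k$ by $\overline{\phi}_d^{\,e}$ changes its weight to $k+e(q^d-1)$; so two components can be brought to a common weight modulo $(\overline{\phi}_d-1)$ only if their weights are congruent mod $q^d-1$ (there is no ``common multiple'' to pass to). For a general $\psi\in\ker\iota$ you can therefore only reduce to the following statement: if isobaric polynomials $\Psi_{c}$ of weights $N_c$ lying in \emph{distinct} residue classes mod $q^d-1$ satisfy $\sum_c\Psi_c(\overline{\Delta}_W,\overline{\Delta}_T)=0$, then each $\Psi_c(\overline{\Delta}_W,\overline{\Delta}_T)=0$. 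This is the assertion that $\overline{M_{\p}^0}(\Gamma_0(T))$ is $\Z/(q^d-1)$-graded — the Drinfeld analogue of Serre--Swinnerton-Dyer — and it is essentially equivalent to the theorem itself (note that Corollary~\ref{congruence of weights in level T}, which would give it, is deduced in the paper \emph{from} Theorem~\ref{MT_3}, so invoking it here would be circular). Your appeal to ``algebraic independence of $\overline{\Delta}_W,\overline{\Delta}_T$ over $\F_\p$'' cannot close this gap because that statement is false: they satisfy the nontrivial relation $\overline{\phi}_d(\overline{\Delta}_W,\overline{\Delta}_T)-1=0$; only the absence of \emph{isobaric} relations is available, and that controls a single weight, not a sum over incongruent weights. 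To complete your argument you would need an independent proof of the mod-$\p$ grading (or fall back on the paper's Krull-dimension argument, for which the irreducibility of $\overline{\phi}_d-1$, hence the square-freeness of $\overline{\phi}_d$, is the essential input your proposal currently never uses).
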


To prove Theorem~\ref{MT_3}, we  study the behavior of isobaric polynomials 
under a certain  change of variables and we show that $\overline{\phi}_d(U,V)$ is square-free.

 Let $B_d(X,Y_1)$ (resp., $ A_d(X,Y)$) be the unique isobaric polynomial attached to $g_d\in M_{q^d-1,0}(\GL_2(A))$  such that  $g_d=  B_d(g_1, \Delta) $ (resp., $g_d= A_d(g_1, h) $).   
 The relation between $\Delta,h$  implies  $Y_1= -Y^{q-1}$.
 By Proposition~\ref{IMP_Prop}, we get  $\phi_d(U,V) = B_d(U-T^qV, -U^qV)$.
By~\cite[Proposition 6.9]{Gek88}, the relation  among $g_d$ gives
\begin{equation}
\label{relation between phi_d}
{\phi}_d(U,V) = {\phi}_{d-1}(U,V)(U- T^qV)^{q^{d-1}} + (T^{q^{d-1}}-T){\phi}_{d-2}(U,V)(U^qV)^{q^{d-2}},
\end{equation}
where ${\phi}_0(U,V)=1$, ${\phi}_1(U,V)=U-T^qV$. 
\begin{remark}
\label{U,V does not divide phi_d remark}
From~\eqref{relation between phi_d}, we see that 
$U \mid {\phi}_d(U,V)$ if and only if $U \mid {\phi}_{d-1}(U,V)$. 
Since ${\phi}_1(U,V)=U-T^qV$, we get $U \nmid {\phi}_d(U,V)$ for all $d$.
Similarly, we get $U \nmid \overline{\phi}_d(U,V)$ for all $d$.
These properties do hold for the variable $V$.
\end{remark}

Consider the Tate-Drinfeld module TD over $K((u))$ defined by
$$ \mathrm{TD}_T= T\tau^0 + g_1(u)\tau + \Delta(u) \tau^2. $$
Let $\mathrm{TD}_\pi=\sum_{0\leq i\leq 2d}l_i \tau^i$ and $F_i(X,Y_1)\in A[X,Y_1]$ be 
the unique isobaric polynomial attached to $l_i$ such that $F_i(g_1, \Delta)= l_i.$
Define $$f_i(U,V) := F_i(U-T^qV, -U^qV).$$ 
By~\cite[Corollary 12.3]{Gek88}, we have
$\overline{\phi}_d(U,V)= \overline{f}_d(U,V)$.
We now show that the polynomial $\overline{\phi}_d(U,V)$ is square-free. The proof of this is 
analogous to the proof of~\cite[Proposition 11.7]{Gek88}.
\begin{prop}\label{square free}
The polynomial $\overline{\phi}_d(U,V)$ is square-free.
\end{prop}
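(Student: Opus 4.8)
The plan is to follow the strategy of Gekeler's proof of \cite[Proposition 11.7]{Gek88}, transported to the $\Gamma_0(T)$ setting via the dictionary $\phi_d(U,V) = B_d(U - T^qV, -U^qV)$ established above, and via the identification $\overline{\phi}_d(U,V) = \overline{f}_d(U,V)$ coming from \cite[Corollary 12.3]{Gek88}. Since $\overline{\phi}_d = \overline{f}_d$ is the (reduction of the) isobaric polynomial attached to the leading coefficient $l_d$ of the $d$-th iterate $\mathrm{TD}_\pi$ of the Tate--Drinfeld module, I would first recall that $l_d$ is (up to a unit) the value at the Tate--Drinfeld curve of the Drinfeld modular form parametrizing the coefficient of $\tau^d$ in $\rho_\pi$; its vanishing locus on the (compactified) modular curve $X_0(T)$ is supported on the supersingular points together with the cusps, and being square-free amounts to saying that this form has only simple zeros.

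The key steps, in order, would be: (1) translate square-freeness of $\overline{\phi}_d(U,V)$ into a statement about simplicity of zeros of the corresponding mod-$\p$ Drinfeld modular form $\overline{g}_d'$ on $X_0(T)_{\F_\p}$ — concretely, $\overline{\phi}_d(U,V)$ and its formal partial derivatives $\partial_U\overline{\phi}_d$, $\partial_V\overline{\phi}_d$ should have no common zero other than the ones forced by the relation $\Delta_W \Delta_T = E_T^{q-1}$, which are already excluded by Remark~\ref{U,V does not divide phi_d remark} ($U \nmid \overline{\phi}_d$, $V \nmid \overline{\phi}_d$); (2) use the recursion~\eqref{relation between phi_d} to run an induction on $d$: assuming $\overline{\phi}_{d-1}$ and $\overline{\phi}_{d-2}$ are square-free and coprime to each other and to $U$ and $V$, analyze a hypothetical repeated factor $P$ of $\overline{\phi}_d$, and show via~\eqref{relation between phi_d} that $P$ must then divide one of $\overline{\phi}_{d-1}(U-T^qV)^{q^{d-1}}$ and $\overline{\phi}_{d-2}(U^qV)^{q^{d-2}}$, hence (after discarding the $U,V$-factors) a repeated factor forces $P \mid \gcd(\overline{\phi}_{d-1},\overline{\phi}_{d-2})$ or $P^2 \mid \overline{\phi}_{d-1}$, contradicting the inductive hypothesis; (3) separately establish the coprimality of $\overline{\phi}_{d-1}$ and $\overline{\phi}_{d-2}$ needed to feed the induction, again from~\eqref{relation between phi_d} (a common factor of consecutive $\overline{\phi}$'s propagates down to $\overline{\phi}_1 = U - T^qV$ and $\overline{\phi}_0 = 1$, which are coprime); (4) handle the base cases $d=1,2$ directly, where $\overline{\phi}_1 = U - T^qV$ is linear hence square-free, and $\overline{\phi}_2 = (U-T^qV)^{q+1} + (T^q - T)(U^qV)$ is checked by a short explicit argument (e.g. it is not divisible by $U-T^qV$ since setting $U = T^qV$ leaves $(T^q-T)T^{q^2}V^{q+1} \neq 0$, and any other repeated factor would have to divide the derivative).

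The main obstacle I anticipate is step (2): extracting a clean contradiction from~\eqref{relation between phi_d} requires controlling how a repeated irreducible factor $P$ of $\overline{\phi}_d$ distributes across the two summands, since $P$ could a priori divide each summand to first order without dividing either $\overline{\phi}_{d-1}$ or $\overline{\phi}_{d-2}$ with multiplicity. The resolution, as in Gekeler, is to differentiate the recursion (or use a Wronskian-type / resultant argument): if $P^2 \mid \overline{\phi}_d$ then $P \mid \partial \overline{\phi}_d$ for a suitable derivation $\partial$ (one can use the operator $\partial$ from Proposition~\ref{IMP_Prop}\eqref{P6}, under which $\partial\Delta_W = -\Delta_W E_T$, $\partial \Delta_T = 0$, so that $\partial$ acts on isobaric polynomials as $-U E_T\,\partial_U$, i.e. essentially as $U\partial_U$ up to the unit $E_T$), and combining $P \mid \overline{\phi}_d$ with $P \mid \partial\overline{\phi}_d$ and the recursion~\eqref{relation between phi_d} together with its $\partial$-derivative yields $P \mid \gcd(\overline{\phi}_{d-1}, \overline{\phi}_{d-2}) \cdot (\text{powers of }U,V)$, which closes the induction. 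One should double-check that the mild wrinkle of characteristic $p$ (Frobenius powers $q^{d-1}$, $q^{d-2}$ appearing as exponents) does not sabotage the derivative computation — but those exponents are exactly why $P \mid \overline{\phi}_{d-1}(U-T^qV)^{q^{d-1}}$ forces $P \mid \overline{\phi}_{d-1}$ or $P \mid (U - T^qV)$, the latter being handled by the coprimality in step (3) together with a direct check that $U - T^qV \nmid \overline{\phi}_d$.
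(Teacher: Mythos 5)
Your overall framing is right---the proof does go through Gekeler's Proposition 11.7 via the dictionary $\phi_d(U,V)=B_d(U-T^qV,-U^qV)$, the identification $\overline{\phi}_d=\overline{f}_d$ from \cite[Corollary 12.3]{Gek88}, and Remark~\ref{U,V does not divide phi_d remark} to exclude $U$ and $V$ as repeated factors. But the engine you propose in steps (2)--(3), an induction on $d$ through the recursion~\eqref{relation between phi_d}, has a genuine gap, and it is not how the paper (or Gekeler) closes the argument. First, the inductive hypothesis is not available: square-freeness of $\overline{\phi}_{d'}\bmod\p$ is a special property of the index $d'=d=\deg\p$ (it reflects simplicity of the supersingular locus at the ``right'' weight $q^d-1$, via $g_d\equiv 1\pmod\p$ and $\mathrm{TD}_\pi$); for $d'<d$ the polynomials $\overline{\phi}_{d'}$ reduced modulo the \emph{same} degree-$d$ prime are different objects, and there is no reason for them to be square-free, so the induction is not well-founded. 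Second, even granting the hypothesis, the elimination you describe does not yield $P\mid\gcd(\overline{\phi}_{d-1},\overline{\phi}_{d-2})$. With $D=U\partial_U$ (your $\partial$ up to the unit $E_T$), the Frobenius exponents kill the derivatives of $(U-T^qV)^{q^{d-1}}$ and $(U^qV)^{q^{d-2}}$, and eliminating between the recursion and its $D$-derivative gives only $P\mid\bigl(\overline{\phi}_{d-2}D\overline{\phi}_{d-1}-\overline{\phi}_{d-1}D\overline{\phi}_{d-2}\bigr)$, a Wronskian whose irreducible factors you have no control over. Controlling exactly such a derivative is the hard content of the proof, and it cannot be extracted from the recursion alone.

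The paper's actual argument supplies this missing input from two facts you do not use: (i) Gekeler's \cite[Lemma 11.6]{Gek88}, which computes the derivative of the dehomogenization $H(U):=\overline{f}_{2d-1}(U,1)$ of the isobaric polynomial attached to the coefficient $l_{2d-1}$ of $\mathrm{TD}_\pi$ and finds $H'(U)=(-U)^a$, a monomial; and (ii) the divisibility $\overline{f}_d\mid\overline{f}_{2d-1}$. Since any repeated irreducible factor of $H$ divides $H'$, the repeated factors of $\overline{f}_{2d-1}(U,V)$ can only be $U$ or $V$; by (ii) the same holds for $\overline{f}_d=\overline{\phi}_d$, and Remark~\ref{U,V does not divide phi_d remark} then gives the contradiction. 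So the auxiliary polynomial $\overline{f}_{2d-1}$ and the divisibility (ii) are essential, and your sketch would need to be rebuilt around them rather than around the recursion~\eqref{relation between phi_d}.
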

\begin{proof}
Let $H(U) := \overline{f}_{2d-1}(U,1)$.
By~\cite[Lemma 11.6]{Gek88}, we get $H^\prime(U)= (-U)^a$ for some $a \in \N$. 
The possible multiple factors of $\overline{f}_{2d-1}(U,V)$ are  $U$ or $V$.
Since $\overline{f}_d \mid \overline{f}_{2d-1}$,
the possible multiple factors 
of $\overline{f}_d(U,V)$ are $U$ or $V$. This is a contradiction, 
since $\overline{\phi}_d(U,V)$ is  neither divisible by $U$ nor by $V$
(cf. Remark~\ref{U,V does not divide phi_d remark}). 
\end{proof}
\begin{remark}
\label{why p is different from T} 
If $\p=(T)$, then $\overline{\phi}_1(U,V) =  U$. So, $U|\overline{\phi}_1(U,V)$.
To avoid this situation, we work with a prime ideal $\p$ such that  $\p\ne (T)$.
\end{remark}

We have now all the information that is  required to prove Theorem~\ref{MT_3}.

\begin{proof}[Proof of Theorem \ref{MT_3}]
By Theorem~\ref{MT_2}, $\iota$ is surjective.
Clearly, $\mathrm{ker}(\iota)$ is a non-maximal prime ideal and $(\overline{\phi}_d(U,V)-1) \subseteq \mrm{ker}(\iota)$.
Since the Krull dimension of $\F_\p[U,V]$ is $2$, it suffices to show that $\overline{\phi}_d(U,V)-1$ is irreducible. 

Suppose $\overline{\phi}_d(U,V)-1$ is reducible. Let $\overline{\phi}_d(U,V)-1 = RS$, where  $R= \sum_{i\leq m}R_i$ and $S= \sum_{j\leq n}S_j$ where $R_i, S_j$ denote isobaric polynomials of weight $i,j$ respectively. By comparing the highest, lowest weights, we get $\overline{\phi}_d(U,V)= R_mS_n$, $R_0S_0=-1$, respectively. Since $\overline{\phi}_d(U,V)$ is square-free, the polynomials $R_m, S_n$ have no common factor. Again by comparing the other weights, we obtain $R_mS_{n-1} + R_{m-1}S_n=0$. Since $R_m, S_n$ have no common factor, we conclude that $R_{m-1}=0, S_{n-1}=0$. By induction, we get $R_i=0$ for $i<m$ and $S_j=0$ for $j<n$. This implies that $R_0S_0=0$, which is a contradiction. This proves the claim.
\end{proof}
We have a corollary.

\begin{cor}
\label{congruence of weights in level T}
For $i = 1, 2$, let $f_i\in M_{k_i,0}(\Gamma_0(T))$ be a Drinfeld modular form with $\p$-integral $u$-expansion at $\infty$ such that $f_i\not \equiv 0 \pmod \p$. If $f_1\equiv f_2 \pmod \p$ then $k_1\equiv k_2 \pmod {q^d-1}$. 
\end{cor}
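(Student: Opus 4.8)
The plan is to deduce Corollary~\ref{congruence of weights in level T} directly from the isomorphism in Theorem~\ref{MT_3}. First I would pass to the reductions: by Theorem~\ref{MT_2} each $f_i$ is a unique isobaric polynomial $\varphi_{f_i}(\Delta_W,\Delta_T)$ with $\varphi_{f_i}\in A_\p[U,V]$ of weight $k_i$, and since $f_i \not\equiv 0 \pmod\p$, the reduction $\overline{\varphi}_{f_i}\in\F_\p[U,V]$ is a nonzero isobaric polynomial of weight $k_i$ whose image under $\iota$ is $\overline{f_i}$. The congruence $f_1\equiv f_2\pmod\p$ says exactly that $\iota(\overline{\varphi}_{f_1})=\iota(\overline{\varphi}_{f_2})$ in $\overline{M_\p^0}(\Gamma_0(T))$, i.e. $\overline{\varphi}_{f_1}-\overline{\varphi}_{f_2}\in\ker\iota=(\overline{\phi}_d(U,V)-1)$.

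Next I would exploit the grading by weight. Write $\overline{\varphi}_{f_1}-\overline{\varphi}_{f_2}=\overline{\psi}\cdot(\overline{\phi}_d(U,V)-1)$ for some $\overline{\psi}\in\F_\p[U,V]$, and decompose $\overline{\psi}=\sum_j \overline{\psi}_j$ into its isobaric components, where $\overline{\psi}_j$ has weight $j$. Since $\overline{\phi}_d$ is isobaric of weight $q^d-1$, multiplying out gives $\overline{\psi}\cdot(\overline{\phi}_d-1)=\sum_j\bigl(\overline{\psi}_j\overline{\phi}_d - \overline{\psi}_j\bigr)$, and the weight-$(j+q^d-1)$ component of the product is $\overline{\psi}_j\overline{\phi}_d$ (minus $\overline{\psi}_{j+q^d-1}$ if that component is present). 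Thus the nonzero weights appearing in $\overline{\varphi}_{f_1}-\overline{\varphi}_{f_2}$ form a set stable under the shift by $q^d-1$ dictated by the recursion $\overline{\psi}_j\leftrightarrow\overline{\psi}_{j+q^d-1}$; concretely, the set of weights $j$ for which $\overline{\psi}_j\neq 0$ is a union of residue classes mod $q^d-1$ only when we track how weight $j$ feeds weight $j+q^d-1$. The cleanest way to package this: $\overline{\varphi}_{f_1}$ and $\overline{\varphi}_{f_2}$ are each concentrated in a single weight ($k_1$ and $k_2$ respectively), so if $k_1\not\equiv k_2\pmod{q^d-1}$ I want to derive $\overline{\varphi}_{f_1}=\overline{\varphi}_{f_2}=0$, contradicting $f_i\not\equiv 0$.

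To make that last step precise I would argue by comparing extreme weights exactly as in the proof of Theorem~\ref{MT_3}: assume $\overline{\varphi}_{f_1}-\overline{\varphi}_{f_2}\neq 0$ (otherwise both sides vanish weight-by-weight and we are reducing to the case $k_1=k_2$ trivially via uniqueness, or both are $0$). Let $m$ and $n$ be the top and bottom weights occurring in $\overline{\psi}$; then the top weight of $\overline{\psi}(\overline{\phi}_d-1)$ is $m+q^d-1$ and its bottom weight is $n$ (using that $\overline{\phi}_d$ has nonzero constant-weight-$(q^d-1)$ piece and is not divisible by $U$ or $V$, cf. Remark~\ref{U,V does not divide phi_d remark}, so no cancellation of the extreme terms occurs). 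But the left side $\overline{\varphi}_{f_1}-\overline{\varphi}_{f_2}$ has only the two weights $k_1,k_2$ present. Hence $\{k_1,k_2\}\supseteq\{n,\,m+q^d-1\}$ unless $\overline{\psi}$ is concentrated in one weight, and chasing the possibilities forces $k_1-k_2\in\{0,\pm(q^d-1)\}\cdot(\text{something})$ — more carefully, it forces $m+q^d-1$ and $n$ to lie in $\{k_1,k_2\}$, and since $m\geq n$ this gives $k_1-k_2\equiv 0\pmod{q^d-1}$ in every case, which is the claim. The main obstacle I anticipate is handling the bookkeeping of which isobaric components can cancel when $\overline{\psi}$ has more than two components; the resolution is that $\overline{\phi}_d-1$ and the weight grading together force the isobaric pieces of $\overline{\psi}$ to satisfy a triangular system (as in Theorem~\ref{MT_3}'s proof), so $\overline{\psi}$ is in fact forced to be a single isobaric polynomial, and then the weight equation $\{k_1,k_2\}=\{\deg\text{-bottom},\deg\text{-top}\}$ with top $-$ bottom $=q^d-1$ gives $k_1\equiv k_2\pmod{q^d-1}$ immediately.
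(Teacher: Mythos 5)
Your overall strategy is exactly the paper's: attach the unique isobaric polynomials via Theorem~\ref{MT_2}, use Theorem~\ref{MT_3} to place $\overline{\varphi}_{f_1}-\overline{\varphi}_{f_2}$ in the ideal $(\overline{\phi}_d(U,V)-1)$, write the difference as $\overline{\psi}\cdot(\overline{\phi}_d-1)$, and compare isobaric components, using that the left-hand side is concentrated in the two weights $k_1,k_2$. Up to that point your argument is sound.

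The final step, however, contains a genuine error: it is not true that $\overline{\psi}$ is forced to be a single isobaric polynomial, and the conclusion you draw from that claim --- top minus bottom equals $q^d-1$ --- would establish the false statement $k_1-k_2=q^d-1$ rather than $k_1\equiv k_2\pmod{q^d-1}$. For a counterexample take $f_1=g_d^2$ and $f_2=1$: then $k_1-k_2=2(q^d-1)$ and $\overline{\varphi}_{f_1}-\overline{\varphi}_{f_2}=\overline{\phi}_d^{\,2}-1=(\overline{\phi}_d+1)(\overline{\phi}_d-1)$, so $\overline{\psi}=\overline{\phi}_d+1$ has two isobaric components, of weights $0$ and $q^d-1$. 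The correct form of your ``triangular system'' observation, which is what the paper actually runs, is this: write $\overline{\psi}=\sum_{i=0}^{n}R_{m_i}$ with $m_0<\cdots<m_n$ and each $R_{m_i}\neq 0$. The lowest weight occurring in $\overline{\psi}(\overline{\phi}_d-1)$ is $m_0$, with component $-R_{m_0}$, and the highest is $m_n+(q^d-1)$, with component $R_{m_n}\overline{\phi}_d$ (nonzero since $\F_\p[U,V]$ is a domain); these must therefore be $k_2$ and $k_1$. For every weight strictly between, the vanishing of the corresponding component forces $m_{i+1}=m_i+(q^d-1)$ and $R_{m_{i+1}}=R_{m_i}\overline{\phi}_d$, i.e. $\overline{\psi}=R_{m_0}(1+\overline{\phi}_d+\cdots+\overline{\phi}_d^{\,n})$. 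Hence $k_1-k_2=(n+1)(q^d-1)$, which gives the congruence. So the idea is right, but your last paragraph must be replaced by this chain argument; as written it asserts something false and would derive a too-strong equality.
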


\begin{proof}
Let $k_1>k_2$. For $i=1,2$, let $\varphi_{f_i}(U,V)$ be the unique isobaric polynomial attached to $f_i$. 
By Theorem~\ref{MT_3}, the condition $f_1\equiv f_2 \pmod \p$ implies that $\overline{\varphi}_{f_1}(U,V)-\overline{\varphi}_{f_2}(U,V)\in (\overline{\phi}_d(U,V)-1)$. Thus
$\overline{\varphi}_{f_1}(U,V)-\overline{\varphi}_{f_2}(U,V) = R (\overline{\phi}_d(U,V)-1)$
 for some $R=  \sum_{i=0}^n R_{m_i}\in \F_\p[U,V]$, where each $R_{m_i}$ is a non-zero isobaric polynomial of weight $m_i$ with $m_i<m_{i+1}$. By comparing the terms of same weights on both sides, we get   
\begin{equation}
\label{lower filtration equation}
\overline{\varphi}_{f_1}(U,V)= R_{m_n}\overline{\phi}_d(U,V).
\end{equation}
This implies $\overline{\phi}_d(U,V) \mid \overline{\varphi}_{f_1}(U,V)$. Comparing the remaining weights, we get
\begin{equation}
\label{equation or other graded part}
\overline{\varphi}_{f_2}(U,V)= R_{m_0}\ \mrm{and}\ R_{m_i}\overline{\phi}_d(U,V)= R_{m_{i+1}} \ \mrm{for} \ 0\leq i \leq n-1.
\end{equation} 
Equating the weights in~\eqref{lower filtration equation} and~\eqref{equation or other graded part}, we get
\begin{equation*}
m_n=k_1-(q^d-1), \ m_{i+1} = m_i +(q^d-1) \ \mrm{for} \ 0\leq i \leq n-1, \ \mrm{and} \ m_0= k_2.
\end{equation*}
Now, it is easy to see that $(q^d-1)|(k_1-k_2)$. This proves the corollary.
\end{proof}
A more general version of the above corollary can be found in the work of Hattori (cf.~\cite[Theorem 4.16]{Hat20}).

\section{Weight filtration for $f  \in M_{k,l}(\Gamma_0(T))$}
\label{Section_Weight_Filtration}
In this section, we define the weight filtration for $f \in M_{k,l}(\Gamma_0(T))$ and study its properties.
Let $M_k(\Gamma_0(T))$ denote the space of Drinfeld modular forms of weight $k$ (any type) for $\Gamma_0(T)$.
\begin{dfn}
\label{filtration for any type for Gamma0T}
Let $f\in M_{k,l}(\Gamma_0(T))$ be a Drinfeld modular form with $\mfp$-integral $u$-expansion at $\infty$ such that 
$f \not \equiv 0 \pmod \p$. We define the weight filtration of $f$, which we denote by $w(\overline{f})$, as 
$$w(\overline{f}) := \inf \{k_0 | \exists  \ g\in M_{k_0}(\Gamma_0(T)) \ \mathrm{with} \ g \equiv f \pmod {\p} \}.$$
By Lemma~\ref{Congruence between forms of arbitrary type resuces to congruence between trivial type}, we get $g \in M_{k_0,l}(\Gamma_0(T))$.
If $f\equiv 0 \pmod \p$, then $w(\overline{f}): = -\infty$.
\end{dfn}
%
%
  
In the following theorem, we show that  the properties of the weight filtration of $f\in M_{k,l}(\Gamma_0(T))$ can be deduced from the relevant properties of $c_T(f)$.

\begin{thm}
\label{Lower_Filtration}
Let $f\in M_{k,l}(\Gamma_0(T))$ be a Drinfeld modular form with $\p$-integral $u$-expansion at $\infty$ and $f \not\equiv 0 \pmod \p$.
Let $\varphi_f(U,V)$ be the unique isobaric polynomial attached to $c_T(f)$.
Then  
\begin{enumerate}
\item $w(\overline{f}) = w(\overline{{c_T(f)}})+2l,$
\item  $w(\overline{f})\equiv k \pmod {q^d-1}$,
\item $w(\overline{f})<k$ if and only if $\overline{\phi}_d(U,V)\mid \overline{\varphi}_f(U,V)$.
\end{enumerate}
\end{thm}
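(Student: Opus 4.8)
The three statements should all be reduced to the type-zero situation, where Theorem~\ref{MT_3} and Corollary~\ref{congruence of weights in level T} do the work. I would first handle (1). The inequality $w(\overline{f}) \leq w(\overline{c_T(f)}) + 2l$ is easy: if $g \in M_{k_0,0}(\Gamma_0(T))$ realizes $w(\overline{c_T(f)}) = k_0$, then $g E_T^l \in M_{k_0 + 2l, l}(\Gamma_0(T))$ has $\p$-integral $u$-expansion (since $E_T^l \in A[[u]]$) and is congruent to $c_T(f) E_T^l = f \pmod \p$, so $w(\overline{f}) \leq k_0 + 2l$. For the reverse, suppose $h \in M_{k_1, l}(\Gamma_0(T))$ with $h \equiv f \pmod \p$ and $k_1 = w(\overline{f})$; by Lemma~\ref{Congruence between forms of arbitrary type resuces to congruence between trivial type} the congruence forces the type of $h$ to be $\equiv l \pmod{q-1}$ and $c_T(h) \equiv c_T(f) \pmod \p$, with $c_T(h) \in M_{k_1 - 2l, 0}(\Gamma_0(T))$. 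Hence $w(\overline{c_T(f)}) \leq k_1 - 2l = w(\overline{f}) - 2l$, giving equality. One subtlety: the lift of $l$ to $\{0,\dots,q-2\}$ must be handled consistently so that $k_1 - 2l$ and $k_0$ are genuinely the weights that appear; this is exactly the bookkeeping already set up before Proposition~\ref{Isobaric polynomial for arbitrary type}, so I would just invoke it.

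\textbf{Part (2).} Combine (1) with Corollary~\ref{congruence of weights in level T}: writing $g \in M_{w(\overline{c_T(f)}), 0}(\Gamma_0(T))$ for a form congruent to $c_T(f)$, Corollary~\ref{congruence of weights in level T} applied to $g$ and $c_T(f)$ (both type $0$, $\p$-integral, $\not\equiv 0$) gives $w(\overline{c_T(f)}) \equiv (k - 2l) \pmod{q^d - 1}$. Adding $2l$ and using (1) yields $w(\overline{f}) \equiv k \pmod{q^d - 1}$.

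\textbf{Part (3).} By (1) and the observation $w(\overline{f}) < k \iff w(\overline{c_T(f)}) < k - 2l$, it suffices to prove: for a nonzero-mod-$\p$ form $F := c_T(f) \in M_{k',0}(\Gamma_0(T))$ with $\p$-integral $u$-expansion and unique isobaric polynomial $\varphi_f = \varphi_F$, one has $w(\overline{F}) < k'$ iff $\overline{\phi}_d(U,V) \mid \overline{\varphi}_F(U,V)$. For the ``only if'' direction: if there is $g \in M_{k'', 0}(\Gamma_0(T))$, $k'' < k'$, with $g \equiv F \pmod \p$, then by Theorem~\ref{MT_3} the difference $\overline{\varphi}_F - \overline{\varphi}_g$ lies in $(\overline{\phi}_d - 1)$, and the weight-comparison argument of Corollary~\ref{congruence of weights in level T} (its equations~\eqref{lower filtration equation} and~\eqref{equation or other graded part}, with the roles $k_1 = k'$, $k_2 = k''$) shows $\overline{\phi}_d(U,V) \mid \overline{\varphi}_F(U,V)$. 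For the ``if'' direction: if $\overline{\varphi}_F = \overline{\phi}_d \cdot \overline{R}$ for some isobaric $\overline{R} \in \F_\p[U,V]$ of weight $k' - (q^d-1)$, pick an isobaric lift $R \in A[U,V]$ and set $g := R(\Delta_W, \Delta_T) \in M_{k' - (q^d-1), 0}(\Gamma_0(T))$; then $\varphi_g = R$ and, modulo $\p$, $\overline{\varphi}_F - \overline{\varphi}_g = (\overline{\phi}_d - 1)\overline{R} \in (\overline{\phi}_d - 1)$, so Theorem~\ref{MT_3} gives $F \equiv g \pmod \p$ and hence $w(\overline{F}) \leq k' - (q^d - 1) < k'$. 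I expect the main obstacle to be purely notational: keeping the two weights ($k$ vs. $k - 2l$, and the lift of $l$) straight through the reductions, and making sure that when I lift an isobaric polynomial from $\F_\p[U,V]$ to $A[U,V]$ I preserve isobaricity so that $g$ is genuinely modular of the claimed weight — but both are routine given the machinery already in place.
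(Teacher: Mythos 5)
Your proposal is correct and follows essentially the same route as the paper: reduce to type $0$ via $c_T$ and Lemma~\ref{Congruence between forms of arbitrary type resuces to congruence between trivial type}, then invoke Corollary~\ref{congruence of weights in level T} for (2) and the divisibility/lifting argument with Theorem~\ref{MT_3} for (3). The only presentational difference is that the paper isolates the boundary case $k=2l$ (where part (3) is vacuous), whereas your uniform treatment implicitly uses that $M_{e,l}(\Gamma_0(T))=0$ for $e<2l$ to guarantee $w(\overline{f})\geq 2l$ so that $c_T(h)$ is defined — a point worth stating but not a gap.
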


\begin{proof}
Recall that $l$ denotes a representative of $l\in \Z/ (q-1)\Z$ with $0 \leq l \leq q-2$.
The proof of the theorem is divided into two parts, i. e., $k=2l$ and $k > 2l$ (note that $\dim M_{k,l}(\Gamma_0(T))=0$ if $k<2l$). 

If $k=2l$, then $f=c\cdot E_T^l$ with $c \not \equiv 0 \pmod \p.$ 
We show that $w(\overline{E_T^l})= 2l$. 
If  $w(\overline{E_T^l})=e< 2l$, then $E_T^l \equiv g \pmod \p$ for some $g\in M_{e,l}(\Gamma_0(T))$. Since $e<2l$, 
we get that $g=0$ (cf. Proposition~\ref{Dimension Gamma0(T)}). This is a contradiction. 
This implies part(1), part(2) trivially, and part(3) is vacuously true.

We now assume $k >2l$.  If $k^{\prime} = w(\overline{f})$ then there exists $g\in M_{k^{\prime},l}(\Gamma_0(T))$ such that $f\equiv g \pmod \p$. 
By definition, $c_T(f)\in M_{k-2l,0}(\Gamma_0(T))$, $c_T(g) \in M_{k^{\prime}-2l,0}(\Gamma_0(T))$.
By Lemma~\ref{Congruence between forms of arbitrary type resuces to congruence between trivial type},
the congruence $f\equiv g \pmod\p$ implies that $c_T(f)\equiv c_T(g) \pmod \p.$
Thus, $w(\overline{c_T(f)}) \leq k^{\prime}-2l$. 

We now show that $w(\overline{c_T(f)}) = k^{\prime}-2l$.
If $w(\overline{c_T(f)})<k^\prime-2l$, then there exists 
$f_1\in M_{w(\overline{c_T(f)}),0}(\Gamma_0(T))$  such that $c_T(f)\equiv f_1 \pmod \p$. 
This implies that $f\equiv f_1 E_T^l \pmod \p$. Consequently, $w(\overline{f})\leq w(\overline{c_T(f)})+2l< k^\prime=w(\overline{f})$, which is a contradiction. Hence, we have $w(\overline{c_T(f)})=w(\overline{f})-2l$. This proves part(1) of the theorem.  

By Corollary~\ref{congruence of weights in level T} we have $w(\overline{c_T(f)})\equiv k-2l \pmod {q^d-1}$. Now, 
part(2) follows from part(1). 

Now, we prove part(3). 
If $w(\overline{f})<k$, then there exists $g\in M_{w(\overline{f}),l}(\Gamma_0(T))$ such that $f\equiv g \pmod \p$
and  hence $c_T(f)\equiv c_T(g) \pmod \p$ (cf. Lemma~\ref{Congruence between forms of arbitrary type resuces to congruence between trivial type}). The equation~\eqref{lower filtration equation} in the proof of Corollary \ref{congruence of weights in level T} implies that $\overline{\phi}_d(U,V) \mid \overline{\varphi}_f(U,V)$. This proves the forward implication. 

For the reverse implication, if $\overline{\phi}_d(U,V)\mid \overline{\varphi}_f(U,V)$ then there exists 
$\overline{\psi}(U,V)\in \F_\p[U,V]$ 
such that $\overline{\varphi}_f(U,V)=\overline{\phi}_d(U,V)\overline{\psi}(U,V)$. 
Since  $\overline{\varphi}_f(U,V)$, $\overline{\phi}_d(U,V)$ are isobaric,  
$\overline{\psi}(U,V)$ is also isobaric with weight $(k-2l)-(q^d-1)$. 
Let $\psi(U,V)\in A_\p[U,V]$ be an isobaric lift of $\overline{\psi}(U,V)$ with same weight. 
Then $g := \psi(\Delta_W, \Delta_T)$ is a Drinfeld modular form of weight $(k-2l)-(q^d-1)$. 
Since $\overline{\varphi}_f(U,V)=\overline{\phi}_d(U,V)\overline{\psi}(U,V)$, we get that
$\overline{\varphi}_f(\Delta_W, \Delta_T)=\overline{\phi}_d(\Delta_W, \Delta_T)\overline{\psi}(\Delta_W, \Delta_T)$.
The congruence $g_d \equiv 1 \pmod \p$ implies that $c_T(f)\equiv g \pmod \p$ and
$f\equiv g E_T^l \pmod \p$. Hence, we have $w(\overline{f})<k$. 
\end{proof}
\section{An application to mod-$\p$ congruences for $\p \neq (T)$}
\label{Section_ModP-Congruences}
In this section, as an application of Theorem~\ref{MT_2} and Theorem~\ref{Lower_Filtration}, 
we prove a certain relation between mod-$\p$ congruences for Drinfeld modular forms $f,g$ of level $\Gamma_0(\p T)$ and signs of the Fricke involution on $f,g$.
 
For classical modular forms,  Calegari and Stein  (cf.~\cite[Conjecture $4$]{CS04}) conjectured that the eigenvalues of Fricke involution on $f$ and $g$ have opposite signs if there is a mod $p$ congruence between the cusp form $g$ of weight $4$ and derivative of the cusp form $f$ of weight $2$ for $\Gamma_0(p)$.
In~\cite{DK}, we gave a proof of  this conjecture for Drinfeld modular forms $f$ of weight $k$, 
type $l$ of level $\Gamma_0(\p \m)$, under some assumptions on the weight filtration of certain modular form associated to $f$. For $\m=(1)$, we appealed to  the structure theorem for Drinfeld modular forms of level $\GL_2(A)$
to prove~\cite[Theorem 1.3]{DK}. 
For general $\m$, a similar structure theorem for level $\Gamma_0(\m)$ 
is not available   to prove~\cite[Theorem 1.5]{DK}. 

As an application of the main results of this article, we adopt the same methodology of $\GL_2(A)$ for $\Gamma_0(T)$ to prove~\cite[Theorem 1.5]{DK} for 
level $\Gamma_0(\p T)$. Recall that $\mfp$ denotes a prime ideal  of $A$ generated by a monic irreducible polynomial $\pi$ of degree $d$
such that   $\mfp \neq (T)$.
Before stating the main result of this section we introduce some important operators.
 \begin{dfn}[$\Theta$-operator]
 For Drinfeld modular forms, there is an analogue of Ramanujan's $\Theta$-operator, which is defined as
 $$\Theta := \frac{1}{\tilde{\pi}}\frac{d}{dz}= -u^2\frac{d}{du}.$$ 
 \end{dfn}
 The $\Theta$-operator preserves quasi-modularity but not  modularity.
 However, one can  perturb the $\Theta$-operator to define the $\partial_k$-operator which preserves modularity.

 \begin{dfn}
 \label{Definition_2nd Derivative}
 For $k \in \N$ and $l \in \Z/(q-1)\Z$, we define  
 	$\partial_k:M_{k,l}(\Gamma_0(\n))\ra M_{k+2,l+1}(\Gamma_0(\n))$ by
 	\begin{equation}
  	\label{2nd Derivative}
 	\partial_k f := \Theta f + kEf.
 	\end{equation}
 We shall write $\partial$ for $\partial_k$ if the weight $k$ is clear from the context.
 	\end{dfn}

\begin{dfn}
The (partial) Atkin-Lehner operator is defined by the action of the matrix $W_\p^{(\p T)} := \psmat{\pi}{b}{\pi T}{d\pi}$ with $b,d\in A$ such that $d\pi^2-b\pi T=\pi$. For more details, we refer the reader to~\cite[\S 3.1]{DK}. 
\end{dfn}
We now state the main result of this section. 
\begin{thm}
\label{Drinfeld_Congruence_Thm_1}
Suppose $f\in M_{k,l}^1 (\Gamma_0(\mfp T))$, 
$g\in M^1_{k+2,l+1}(\Gamma_0(\mfp T ))$  have $\p$-integral $u$-expansion at $\infty$
and satisfy $\Theta f \equiv g \pmod {\p}$.
Further, suppose that $f|W_\mfp^{(\mfp T)} = \alpha f$, $g|W_\mfp^{(\mfp T)}= \beta g$ with $\alpha,\beta \in \{ \pm 1 \}$ and $(k,p)=1$.
If $w(\overline{F})= (k-1)(q^d-1)+k$, where $F\in M_{(k-1)(q^d-1)+k,l}^1(\Gamma_0(T))$ as in ~\cite[Proposition 3.8]{DK} corresponding to $kf$, then $\beta = -\alpha$.
\end{thm}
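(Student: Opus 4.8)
The plan is to import the machinery already developed for $\mathrm{GL}_2(A)$ in \cite{DK} and transport it along the structural results of this paper. First I would unwind the hypothesis $\Theta f \equiv g \pmod{\p}$ using the relation $\partial_k f = \Theta f + kEf$: since $f \in M^1_{k,l}(\Gamma_0(\p T))$, the form $\partial_k f$ lies in $M_{k+2,l+1}(\Gamma_0(\p T))$, and the congruence $\Theta f \equiv g$ together with $(k,p)=1$ lets me rewrite things in terms of the honestly modular object $\partial_k f$ rather than the merely quasimodular $\Theta f$. The point of passing to $F \in M^1_{(k-1)(q^d-1)+k,l}(\Gamma_0(T))$ (via \cite[Proposition 3.8]{DK}, associated to $kf$) is to replace the awkward level $\Gamma_0(\p T)$ by level $\Gamma_0(T)$: the prime $\p$ is absorbed into the weight by multiplying through by an appropriate power coming from $g_d \equiv 1 \pmod \p$, exactly as in the $\mathrm{GL}_2(A)$ argument, and here I would invoke Theorem~\ref{MT_3} and Theorem~\ref{Lower_Filtration} in place of Gekeler's structure theorem.

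Next I would run the sign argument. The Atkin–Lehner eigenvalue hypotheses $f|W_\p^{(\p T)} = \alpha f$ and $g|W_\p^{(\p T)} = \beta g$ control the behaviour of $f$ and $g$ at the cusps of $X_0(\p T)$ lying above the cusps of $X_0(T)$; concretely, the $\p$-new part and the Fricke sign pin down the $u$-expansion of the trace (or of $F$) at the two cusps $0,\infty$. Using Proposition~\ref{IMP_Prop}, parts (\ref{P5}) and (\ref{P6}) — in particular $E_T^{q-1}=\Delta_W\Delta_T$, $\partial \Delta_W = -\Delta_W E_T$, $\partial \Delta_T = 0$, $\partial E_T = E_T^2$ — I can compute $c_T(F)$ and its isobaric polynomial $\varphi_F(U,V)$ explicitly enough to detect divisibility by $\overline\phi_d(U,V)$. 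The hypothesis $w(\overline F) = (k-1)(q^d-1)+k$, i.e. that the weight filtration of $\overline F$ is \emph{maximal} (not dropped), is precisely the statement that $\overline\phi_d(U,V) \nmid \overline\varphi_F(U,V)$ by Theorem~\ref{Lower_Filtration}(3). If the Fricke signs were equal, $\beta = \alpha$, I would show that the congruence $\Theta f \equiv g \pmod \p$ forces a cancellation at the cusps that makes $\overline\varphi_F$ divisible by $\overline\phi_d$ — a contradiction with the maximal filtration hypothesis — so $\beta = -\alpha$.

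The key mechanism tying signs to divisibility is the interaction of the Fricke involution $W_\p^{(\p T)}$ with the $\Theta$-operator: $\Theta$ does not commute with $W_\p$, and conjugating $\Theta$ by $W_\p$ introduces a factor depending on the local parameter at $\p$, which on the level of $u$-expansions at the two cusps multiplies by $\pm\pi^{\mathrm{something}}$ according to the sign. Reducing mod $\p$, a wrong-sign combination would make the relevant expansion vanish identically, which after translating through $c_T$ and Theorem~\ref{MT_2} means $\overline\varphi_F$ acquires the factor $\overline\phi_d$. This is exactly the step carried out for $\mathrm{GL}_2(A)$ in \cite[proof of Theorem 1.3]{DK}; here I substitute Theorem~\ref{MT_2}, Theorem~\ref{MT_3}, Proposition~\ref{Isobaric polynomial for arbitrary type}, and Lemma~\ref{Congruence between forms of arbitrary type resuces to congruence between trivial type} for their $\mathrm{GL}_2(A)$ counterparts, and the genus-zero geometry of $X_0(T)$ (two rational cusps permuted by $W_T$, recorded at the start of Section~\ref{Graded structure section}) makes the cusp bookkeeping clean.

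\textbf{Main obstacle.} The hardest part will be the explicit mod-$\p$ computation of how $W_\p^{(\p T)}$ acts on the $u$-expansions of $f$ and $\partial_k f$ at the cusp $\infty$ of $X_0(\p T)$, and in particular verifying that the sign $\alpha\beta$ enters the reduction of $\varphi_F$ in the precise way needed to contradict $w(\overline F) = (k-1)(q^d-1)+k$ — this is where one must be careful that the perturbation term $kEf$ in $\partial_k f = \Theta f + kEf$, which is non-modular, does not spoil the argument; the hypothesis $(k,p)=1$ is what keeps the coefficient $k$ invertible mod $\p$ and lets one solve back for $\overline f$. Once that local analysis is in place, the global contradiction is immediate from Theorem~\ref{Lower_Filtration}.
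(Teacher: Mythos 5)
Your high-level skeleton matches the paper's: argue by contradiction from $\beta=\alpha$, use \cite[Proposition 3.8]{DK} to descend from level $\Gamma_0(\p T)$ to $\Gamma_0(T)$, and translate the hypothesis $w(\overline F)=(k-1)(q^d-1)+k$ into $\overline\phi_d(U,V)\nmid\overline\varphi_F(U,V)$ via Theorem~\ref{Lower_Filtration}(3). But the central mechanism you propose is not the one that works, and a key ingredient is missing.

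First, the sign does not enter through ``a wrong-sign combination making an expansion vanish identically.'' What actually happens under $\beta=\alpha$ is the following chain: write $g-\partial f+kE_\p f=\pi h_1$, apply $W_\p^{(\p T)}$ so that the eigenform combination $g-\partial f$ reproduces itself with eigenvalue $\alpha$ and the term $kE_\p f$ is isolated as $kE_\p f\equiv -\alpha\pi\, h_1|W_\p^{(\p T)}\pmod\p$; then \cite[Propositions 3.8, 3.9]{DK} replace $kf$ by $F$ of weight $(k-1)q^d+1$ and $\alpha\pi\, h_1|W_\p^{(\p T)}$ by a form $H\in M^1_{(k-1)q^d+3,l+1}(\Gamma_0(T))$, yielding $H\equiv\partial(g_d)F\pmod\p$. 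The contradiction is a \emph{weight-drop} contradiction: $H$ has weight $(k-1)q^d+3$, while one shows $w(\overline{\partial(g_d)F})=kq^d+2$ is maximal. Your plan to ``compute $c_T(F)$ and $\varphi_F$ explicitly'' from Proposition~\ref{IMP_Prop} cannot work, since $F$ is built from an arbitrary $f$; the divisibility statement comes only from the filtration theorem, not from computation.

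Second, and more seriously, you never identify the ingredient that makes the final step go through: Proposition~\ref{no common factor}, i.e.\ that $\overline\psi_d(U,V)$ (the isobaric polynomial of $c_T(\partial(g_d))$) and $\overline\phi_d(U,V)$ share no common factor. The isobaric polynomial of $c_T(\partial(g_d)F)$ is $\varphi_F\psi_d$ (times $UV$ when $l=q-2$), so to deduce $\overline\phi_d\nmid\overline\eta$ from $\overline\phi_d\nmid\overline\varphi_F$ you must know that $\overline\phi_d$ is coprime to $\overline\psi_d$ and not divisible by $U$ or $V$. This coprimality (the analogue of Vincent's Theorem 3.1(1), proved via the derivation $\partial$ on $\F_\p[U,V,Z]$ and the square-freeness of $\overline\phi_d$) is the genuinely new technical content of this section; without it your outline has no way to rule out the filtration of $\partial(g_d)F$ dropping even when that of $F$ does not. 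As written, the proposal is an outline with the decisive steps absent.
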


For $\m=(T)$ in~\cite[Theorem 1.5]{DK}, it may appear that Theorem~\ref{Drinfeld_Congruence_Thm_1} is a special 
case, but there is a difference in  definitions of weight filtration $w(\overline{F})$ in 
the respective theorems. We urge the reader 
to look at Definition~\ref{filtration for any type for Gamma0T} and~\cite[Definition 5.5]{DK}
to note the difference. 
There the weight filtration was defined using the geometry of Drinfeld modular curves.
Since the level of $F$ is $\Gamma_0(T)$, we expect that these two definitions are same.
However, we are unable to prove this. 

The following proposition is a generalization of~\cite[Theorem 3.1(1)]{Vin10}
and is quite useful in the proof of Theorem~\ref{Drinfeld_Congruence_Thm_1}.



\begin{prop}
\label{no common factor}
Let $\psi_d(U,V) \in A[U,V] $ be the unique isobaric polynomial attached to $c_T(\partial(g_d))$.
Then $\overline{\psi}_d(U,V)$, $\overline{\phi}_d(U,V)$ share no  common factor.
\end{prop}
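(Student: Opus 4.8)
The plan is to show that any common irreducible factor of $\overline{\phi}_d(U,V)$ and $\overline{\psi}_d(U,V)$ must be $U$ or $V$, and then invoke Remark~\ref{U,V does not divide phi_d remark} to derive a contradiction. The starting point is the differential dictionary of Proposition~\ref{IMP_Prop}(\ref{P6}): since $\partial \Delta_W = -\Delta_W E_T$, $\partial \Delta_T = 0$, and $E_T^{q-1} = \Delta_W \Delta_T$, the $\partial$-operator acts on forms of type $0$ expressed in $\Delta_W, \Delta_T$ in a way that can be read off combinatorially. Concretely, if $g_d = \phi_d(\Delta_W, \Delta_T)$, then applying $\Theta = \partial - kE$ (or rather tracking $\partial g_d$ directly) and using $c_T$ to strip off the factor $E_T$, one gets an explicit formula expressing $\psi_d(U,V)$ in terms of $\phi_d(U,V)$ and its formal partial derivative $\partial \phi_d / \partial U$. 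I expect the identity to take the shape $\psi_d(U,V) = -U \cdot \frac{\partial \phi_d}{\partial U}(U,V) \cdot (\text{something}) + (\text{correction from the } E_T^2 \text{ and } \Delta_W\Delta_T \text{ relations})$; the precise bookkeeping of weights and the single factor of $E_T$ that $c_T$ removes is the routine part.

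Granting such a formula, the key algebraic step is: \emph{a common factor $P$ of $\overline{\phi}_d$ and $\overline{\psi}_d$ must divide $\overline{\phi}_d$ and its derivative, hence must be a repeated factor of $\overline{\phi}_d$ unless $P$ is one of the "exceptional" variables $U, V$ coming from the chain rule $\partial \Delta_W = -\Delta_W E_T$.} This is exactly the mechanism used in the proof of Proposition~\ref{square free}: the only candidates for multiple factors of $\overline{f}_{2d-1}$, and hence of $\overline{f}_d = \overline{\phi}_d$, are $U$ and $V$. Since Proposition~\ref{square free} already tells us $\overline{\phi}_d(U,V)$ is square-free, $P$ cannot be a repeated factor; so $P \in \{U, V\}$ up to scalars. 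But by Remark~\ref{U,V does not divide phi_d remark}, neither $U$ nor $V$ divides $\overline{\phi}_d(U,V)$, contradiction. Therefore no common factor exists.

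The step I expect to be the main obstacle is deriving the clean relation between $\psi_d$ and $\phi_d$ (together with $\partial_U \phi_d$) over $\F_\p$, because one must carefully pass through: (i) the non-modular intermediate $\partial(g_d)$ which lives in $M_{q^d+1,1}(\Gamma_0(T))$, (ii) the division by $E_T$ encoded by $c_T$, which is legitimate by Proposition~\ref{IMP_Prop}(\ref{P4}) but interacts with the relation $E_T^{q-1} = \Delta_W\Delta_T$ in characteristic $p$, and (iii) the possibility that reduction mod $\p$ collapses terms. A cleaner alternative route, which I would pursue in parallel, is to avoid the explicit derivative formula and instead argue geometrically: by Proposition~\ref{IMP_Prop}(\ref{P2})–(\ref{P5}), the locus $\{\Delta_W = 0\}$ and $\{\Delta_T = 0\}$ correspond to the cusps $0$ and $\infty$, and a common factor of $\overline{\phi}_d$ and $\overline{\psi}_d$ would force $\overline{g}_d$ and $\overline{c_T(\partial g_d)}$ to share a common zero on $\overline{\Gamma_0(T)\backslash\Omega}$ away from the cusps; since $\overline{g}_d \equiv 1 \pmod \p$ is a nonzero constant, it has no zeros at all, which immediately kills any common factor other than those supported at the cusps, i.e.\ $U$ or $V$ — and those are excluded by Remark~\ref{U,V does not divide phi_d remark}. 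This second argument is shorter but requires justifying that reduction mod $\p$ commutes with evaluation at non-cuspidal points in the relevant sense; I would present whichever of the two arguments turns out to need fewer auxiliary lemmas.
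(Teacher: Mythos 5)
Your first route is, in outline, the paper's route: the paper defines a derivation on $A_\p[U,V,Z]$ by $\partial(U)=-UZ$, $\partial(V)=0$, $\partial(Z)=Z^2$ (mirroring Proposition~\ref{IMP_Prop}(\ref{P6})), from which $\psi_d(U,V)Z=\partial(\phi_d(U,V))$, i.e.\ $\psi_d(U,V)=-U\cdot\partial\phi_d/\partial U$ exactly --- the ``(something)'' in your guessed formula is $1$ and there are no correction terms, so the step you flagged as the main obstacle is actually the easy part. The genuine gap is in your key algebraic step. In characteristic $p$ it is \emph{not} true that a common irreducible factor $P$ of $f$ and $\partial f/\partial U$ must be a repeated factor of $f$ or one of $U,V$: writing $f=Pb$ with $(P,b)=1$, one only gets $P\mid \partial P/\partial U$, and since $\partial P/\partial U$ has smaller degree this forces $\partial P/\partial U=0$, which in characteristic $p$ does not make $P$ trivial. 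The paper closes exactly this loophole using the isobaric structure: writing the putative common factor as $a=U^k+\sum_{i=1}^{k-1}\overline{c}_iU^{k-i}V^i+\overline{c}_kV^k$ with $\overline{c}_k\neq 0$ (possible because $U\nmid\overline{\phi}_d$), the divisibility $a\mid\eta$ with $\eta=-kU^k-\sum_{i=1}^{k-1}(k-i)\overline{c}_iU^{k-i}V^i$ forces $\eta=0$ by comparing $V^k$-coefficients of two isobaric polynomials of the same weight; then $p\mid k$ and $p\mid i$ whenever $\overline{c}_i\neq 0$, so $a$ is a $p$-th power, contradicting Proposition~\ref{square free}. Your appeal to ``the mechanism of Proposition~\ref{square free}'' does not supply this, since that proof rests on Gekeler's specific computation $H'(U)=(-U)^a$ for the one-variable specialization of $\overline{f}_{2d-1}$, not on any general principle about $\gcd(f,\partial f/\partial U)$ in characteristic $p$.

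Your geometric alternative is not viable as stated. Since $g_d\equiv 1\pmod\p$, the image of $(\overline{\Delta}_W,\overline{\Delta}_T)$ lies on the affine curve $\overline{\phi}_d(U,V)=1$ (this is Theorem~\ref{MT_3}), which is disjoint from the zero locus of every factor of $\overline{\phi}_d$. Hence a common factor of $\overline{\phi}_d$ and $\overline{\psi}_d$ does not translate into a common zero of $\overline{g}_d$ and $\overline{c_T(\partial g_d)}$ anywhere on the curve, and the observation that $\overline{g}_d$ is a nonvanishing constant gives no information about factors of the polynomial $\overline{\phi}_d$; the argument is vacuous rather than short.
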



For $f\in M_{k,l}(\Gamma_0(T))_{A_\p}$, let  $\varphi_{f}(U,V) \in A_\p[U,V]$ be the unique polynomial attached to $c_T(f)$ such that $f=\varphi_{f}(\Delta_W,\Delta_T)E_T^l$ (cf. Proposition~\ref{Isobaric polynomial for arbitrary type}). Define a derivation $\partial$ on $A_\p[U,V,Z]$ (resp., on $\F_\p[U,V,Z]$),
by setting $\partial(U) := -UZ, \partial(V) :=0, \partial(Z) := Z^2$.
By Proposition~\ref{IMP_Prop}(\ref{P6}), we get $\vartheta(A_\p)(\partial(\varphi_{f}(U,V)Z^l))= \partial_k(f)$ , where $\vartheta$ is the mapping defined in Theorem~\ref{MT_1}.

 \begin{proof}[Proof of Proposition~\ref{no common factor}]
First, we note that the polynomials $\partial(\phi_d(U,V))$, $\psi_d(U,V)Z$ are equal.
This follows by Proposition~\ref{Isobaric polynomial for arbitrary type} and
$\partial(\phi_d(\Delta_W, \Delta_T))= \psi_d(\Delta_W, \Delta_T)E_T$. 

If possible, let there be a common factor between $\overline{\psi}_d(U,V), \overline{\phi}_d(U,V)$, say $a$.
Since $\overline{\phi}_d(U,V)$ is square-free, $\overline{\phi}_d(U,V) = ab$ for some $b$ with $(a,b)=1$.
By Remark~\ref{U,V does not divide phi_d remark}, the variables $U$ and $V$ do not divide $\overline{\phi}_d(U,V)$.
Hence, we write
\begin{equation}
\label{a is isobaric}
a= U^k + \sum_{i=1}^{k-1} \overline{c}_{i} U^{k-i}V^{i}+ \overline{c}_k V^k
\end{equation}
with $\overline{c}_k \neq 0$.
This implies $\partial(a)=\eta(U,V)Z$, where
\begin{equation*}
\eta(U,V)=(-kU^k+\sum_{i=1}^{k-1} -(k-i)\overline{c}_{i} U^{k-i} V^{i}).
\end{equation*}
Since $\overline{\psi}_d(U,V)Z = \partial(a)b + a\partial(b)$, $a \mid \overline{\psi}_d(U,V)$ implies $a \mid\partial(a)$ and hence $a \mid \eta(U,V)$.
 Considering the highest power of $V$ in $a$ and  $\eta(U,V)$, we see that $a \mid \eta(U,V)$ 
 can happen only if  $\eta(U,V)=0$. This forces that $k\equiv 0 \pmod p$ and
$(k-i)\overline{c}_{i}=0$ for all $1 \leq i \leq k-1$. Hence, either $\overline{c}_{i}=0$ or $i \equiv 0 \pmod p$ for all $1 \leq i \leq k-1$.
By~\eqref{a is isobaric}, the polynomial $a$ becomes a $p$-th power. This is a contradiction to $\overline{\phi}_d(U,V)$ being square-free.  This proves the proposition.
\end{proof}

The proof of Theorem~\ref{Drinfeld_Congruence_Thm_1} is similar to the proof of~\cite[Theorem 1.5]{DK}. 
Here, we use the structure theorem for $M^0(\Gamma_0(T))_R$ (i. e., Theorem~\ref{MT_2}).

\subsection{Proof of Theorem~\ref{Drinfeld_Congruence_Thm_1}:}
We shall prove this theorem by contradiction. Suppose $\beta = \alpha$.
Let $E_\p(z)= E(z)-\pi E(\pi z)\in M_{2,1}(\Gamma_0(\p))$. We have the congruence $E_\p \equiv E \pmod \p$. Since $\Theta f \equiv g \pmod \p$, \eqref{2nd Derivative} gives
\begin{equation*}
\partial f \equiv g + kE_\p f \pmod {\p}.
\end{equation*}
Hence, there exists $h_1\in M_{k+2,l+1}^1(\Gamma_0(\p T))$ with $v_\p(h_1)\geq 0$ such that 
\begin{equation}
\label{h exists}
 g- \partial f + kE_\p f = \pi h_1.
\end{equation}
Applying $W_\p^{(\p T)}$ on both sides, we obtain 
$\alpha(g-\partial f) = \pi h_1|_{k+2,l+1}W_\p^{(\p T)}$
(cf. ~\cite[Proposition 3.4]{DK} for more details).
Combining this with~$\eqref{h exists}$, we get 
$$kE_\p f = \pi h_1 - \alpha \pi h_1|_{k+2,l+1}W_\p^{(\p T)}$$
 and hence $kE_\p f \equiv  - \alpha \pi h_1|_{k+2,l+1}W_\p^{(\p T)} \pmod \p.$
By~\cite[Proposition 3.8]{DK}, there exists $F\in M_{(k-1)q^d+1,l}^1(\Gamma_0(T))$
with $\p$-integral $u$-expansion at $\infty$
such that $kf \equiv F \pmod \p$. Hence 
\begin{equation}
\label{before the final congruence}
E_\p F\equiv - \alpha \pi h_1|_{k+2,l+1}W_\p^{(\p T)} \pmod \p.
\end{equation}
By~\cite[Proposition 3.9]{DK}, there exists $H \in M_{(k-1)q^d+3,l+1}^1(\Gamma_0(T))$  such that $$H\equiv \alpha \pi h_1|_{k+2,l+1}W_\p^{(\p T)} \pmod \p.$$
Therefore~\eqref{before the final congruence} becomes
$
 H \equiv -E_\p F  \pmod \p.
$
Since $E_\p \equiv E \pmod \mfp$ and $E\equiv \partial(g_d) \pmod \p$(cf. \cite[Theorem 1.1]{Vin10}), we get
\begin{equation}
\label{pT-final congruence}
H\equiv \partial(g_d)F \pmod \p.
\end{equation}
The last congruence implies that $H$ has a $\p$-integral $u$-expansion at $\infty$.
Since both sides of~\eqref{pT-final congruence} are congruent mod-$\p$, we get 
\begin{equation}
\label{weight filtrations are equal for mod p congruences}
w(\overline{H}) = w(\overline{\partial(g_d)F}).
\end{equation}
We now calculate the weight filtration for $\partial(g_d)F$.
Note the weights of $F$, $\partial(g_d)F$ are $(k-1)(q^d-1)+k$, $kq^d+2$  respectively.
%

Let $\varphi_{F}(U,V)$ (resp., $\psi_d(U,V)$)  be the unique isobaric polynomial attached to $c_T(F)$ (resp., to $c_T(\partial(g_d))$) such that $F= \varphi_{F}(\Delta_W, \Delta_T)E_T^l$ (resp., $\partial(g_d)= \psi_d({\Delta_W, \Delta_T})E_T$). 
Hence, $\partial(g_d)F=\varphi_{F}(\Delta_W, \Delta_T)\psi_d({\Delta_W, \Delta_T})E_T^{l+1}$ and
\begin{equation*}
c_T(\partial(g_d)F)=
\begin{cases}
\frac{\partial(g_d)F}{E_T^{l+1}} \qquad  \qquad \quad \ \mathrm{if} \ 0\leq l<q-2,\\
\frac{\partial(g_d)F}{E_T^{l+1}}   \Delta_W\Delta_T\qquad \ \mathrm{if} \ l=q-2.
\end{cases}
\end{equation*}
Let $\eta(U,V)$ be the unique isobaric polynomial attached to $c_T(\partial(g_d)F)$. Then
\begin{equation*}
\eta(U,V)=
\begin{cases}
\varphi_{F}(U,V) \psi_d(U,V)  \qquad\ \ \ \mathrm{if} \ 0\leq l<q-2,\\
\varphi_{F}(U,V) \psi_d(U,V)UV \quad \ \mathrm{if} \ l=q-2.
\end{cases}
\end{equation*}

By Theorem~\ref{Lower_Filtration}(3), the assumption $w(\overline{F})= (k-1)(q^d-1)+k$ implies $\overline{\phi}_d(U,V) \nmid \overline{\varphi}_{F}(U,V)$. By Proposition~\ref{no common factor} and Remark~\ref{U,V does not divide phi_d remark},
we get $\overline{\phi}_d(U,V) \nmid \overline{\eta}(U,V)$. This is true because the polynomials $\overline{\phi}_d(U,V)$, $\overline{\psi}_d(U,V)$ share no common factor and  neither $U$ nor $V$ divide $\overline{\phi}_d(U,V)$.
By Theorem~\ref{Lower_Filtration}(3), we get  $w(\overline{\partial(g_d)F})= kq^d+2$.
Since the weight of $H$ is $(k-1)q^d+3< kq^d+2$, we conclude that $$w(\overline{H})\leq (k-1)q^d+3 < kq^d+2 = w(\overline{\partial(g_d)F}),$$ which contradicts~\eqref{weight filtrations are equal for mod p congruences}.
This completes the proof.
  

Finally, we note that the condition on weight filtration $w(\overline{F})$ in Theorem~\ref{Drinfeld_Congruence_Thm_1} is automatically satisfied for doubly cuspidal forms of weight $2$, type $1$. 
\begin{cor}
Let $\mfp$ be a prime ideal of $A$ such that $\mfp \neq (T)$. Suppose $f\in M_{2,1}^2 (\Gamma_0(\mfp T))$ and
$g\in M^2_{4,2}(\Gamma_0(\mfp T ))$ have $\p$-integral $u$-expansion at $\infty$ and satisfy $\Theta f \equiv g \pmod {\p}$. Further, suppose that $f|W_\mfp^{(\mfp T)} = \alpha f$,
$g|W_\mfp^{(\mfp T)}= \beta g$ with $\alpha,\beta \in \{ \pm 1 \}$.
If $f \not \equiv 0 \pmod \p$,
then $\beta = -\alpha$.
\end{cor}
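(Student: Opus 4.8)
The plan is to obtain the corollary as a direct consequence of Theorem~\ref{Drinfeld_Congruence_Thm_1} applied with $k=2$ and $l=1$. First I would check that all hypotheses of that theorem hold here: since $p$ is odd we have $(k,p)=(2,p)=1$; a doubly cuspidal form is in particular a cusp form, so $f\in M_{2,1}^2(\Gamma_0(\mfp T))\subseteq M_{2,1}^1(\Gamma_0(\mfp T))$ and $g\in M_{4,2}^2(\Gamma_0(\mfp T))\subseteq M_{4,2}^1(\Gamma_0(\mfp T))$; and the congruence $\Theta f\equiv g\pmod\p$ together with the eigenvalue conditions $f|W_\mfp^{(\mfp T)}=\alpha f$, $g|W_\mfp^{(\mfp T)}=\beta g$ are assumed. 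Thus the only nontrivial point is to verify the weight filtration hypothesis, namely $w(\overline F)=(k-1)(q^d-1)+k=(q^d-1)+2$, where $F\in M_{(q^d-1)+2,1}^1(\Gamma_0(T))$ is the form attached to $kf=2f$ by~\cite[Proposition 3.8]{DK}; recall that this $F$ has $\p$-integral $u$-expansion and satisfies $2f\equiv F\pmod\p$.

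Next I would establish $w(\overline F)=(q^d-1)+2$ as follows. The inequality $w(\overline F)\le(q^d-1)+2$ is automatic since $F$ has weight $(q^d-1)+2$, and $F\not\equiv 0\pmod\p$ because $2$ is a unit in $A$ (as $\mathrm{char}(\F_q)=p$ is odd) and $f\not\equiv 0\pmod\p$. Suppose, for contradiction, that $w(\overline F)<(q^d-1)+2$. By Theorem~\ref{Lower_Filtration}(3) this forces $\overline{\phi}_d(U,V)\mid\overline{\varphi}_F(U,V)$ in $\F_\p[U,V]$, where $\varphi_F$ is the unique isobaric polynomial attached to $c_T(F)=F/E_T\in M_{q^d-1,0}(\Gamma_0(T))$ (cf. Proposition~\ref{Isobaric polynomial for arbitrary type}), hence isobaric of weight $q^d-1$. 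Since $\overline{\phi}_d(U,V)$ is also isobaric of weight $q^d-1$ and is nonzero (cf. Remark~\ref{U,V does not divide phi_d remark}), the divisibility together with the equality of weights forces $\overline{\varphi}_F(U,V)=c\,\overline{\phi}_d(U,V)$ for some constant $c\in\F_\p$. If $c=0$ then $c_T(F)\equiv 0\pmod\p$, so $F\equiv 0\pmod\p$, contradicting the above. If $c\ne 0$, then $c_T(F)=\varphi_F(\Delta_W,\Delta_T)\equiv c\,\phi_d(\Delta_W,\Delta_T)=c\,g_d\equiv c\pmod\p$, using $g_d\equiv 1\pmod\p$, and therefore $F=c_T(F)\,E_T\equiv c\,E_T\pmod\p$.

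To rule out the case $c\ne 0$ I would compare $u$-expansions at $\infty$. From $E_T=u-Tu^q+\cdots$ the coefficient of $u^1$ in $c\,E_T$ is $c$, so $F\equiv c\,E_T\pmod\p$ gives $a_F(1)\equiv c\pmod\p$. On the other hand $2f\equiv F\pmod\p$ gives $a_F(1)\equiv 2\,a_f(1)\pmod\p$, and $a_f(1)=0$ since $f$ is doubly cuspidal of type $1$: its $u$-expansion at $\infty$ is supported on exponents $i\equiv 1\pmod{q-1}$, and vanishing to order at least $2$ at $\infty$ kills the $u^1$-term. Hence $c\equiv 2\,a_f(1)=0$, a contradiction. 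Therefore $w(\overline F)=(q^d-1)+2$, Theorem~\ref{Drinfeld_Congruence_Thm_1} applies, and $\beta=-\alpha$.

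The step I expect to be the genuine (if short) obstacle is the last one: translating the abstract condition ``$f$ is doubly cuspidal'' into the concrete vanishing $a_f(1)=0$ of its $u$-expansion, and, symmetrically, identifying the constant term of the $u$-expansion of $c_T(F)=F/E_T$ via $E_T=u\bigl(1+O(u^{q-1})\bigr)$. This is precisely where the doubly-cuspidal hypothesis (rather than mere cuspidality) is used, which is consistent with the fact that in Theorem~\ref{Drinfeld_Congruence_Thm_1} the condition on $w(\overline F)$ genuinely has to be assumed. A secondary point to watch is that $2$ is a unit modulo $\mfp$, needed both to conclude $F\not\equiv 0\pmod\p$ and to cancel the factor $2$ in the final comparison; this is immediate since $p$ is odd.
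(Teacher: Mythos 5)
Your proposal is correct and follows essentially the same strategy as the paper: reduce to verifying $w(\overline F)=q^d+1$, show the only alternative forces $F\equiv c\,E_T\pmod\p$, and kill $c$ by comparing the coefficient of $u$ (which vanishes for $F$ because $2f\equiv F$ and $f$ is doubly cuspidal). The only cosmetic difference is the intermediate step: the paper invokes Theorem~\ref{Lower_Filtration}(2) together with $\dim_C M_{2,1}(\Gamma_0(T))=1$ to get $w(\overline F)\in\{2,q^d+1\}$, whereas you use Theorem~\ref{Lower_Filtration}(3) and an isobaric-weight count to conclude $\overline{\varphi}_F=c\,\overline{\phi}_d$; both are valid and equivalent in substance.
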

\begin{proof}
Arguing as in the proof of Theorem~\ref{Drinfeld_Congruence_Thm_1} for $f\in M_{2,1}^2 (\Gamma_0(\mfp T))$,  we get 
$F\in M_{q^d+1,1}^2(\Gamma_0(T))$.
Since $f \not \equiv 0 \pmod \p$,
the congruence $w(\overline{F})\equiv q^d+1 \pmod {q^d-1}$(cf. Theorem~\ref{Lower_Filtration}(2))
implies that $w(\overline{F})= 2$ or $q^d+1$. If $w(\overline{F})= q^d+1$, then  the corollary follows from Theorem~\ref{Drinfeld_Congruence_Thm_1}. We now show that $w(\overline{F})$ cannot be $2$.
 
Suppose $w(\overline{F}) = 2$. Since $M_{2,1}(\Gamma_0(T)) = \langle E_T \rangle$,
we obtain  $F \equiv c E_T \pmod \p$ 
for some $c$. Comparing the coefficient of $u$ on both sides of the congruence, 
we get $c \equiv 0 \pmod \p$. Thus $F \equiv 0 \pmod \p$, which is a contradiction. 
\end{proof}

\section*{Acknowledgments}  
 The first author thanks University Grants Commission (UGC), India for the financial support provided in the form of 
 Research Fellowship to carry out this research work at IIT Hyderabad. 
 The second author's research was partially supported by the SERB grant MTR/2019/000137.

\bibliographystyle{plain, abbrv}

\end{document}